\tikzset{every picture/.style={line width=0.75pt}}
\newtheorem{thm}{Theorem}
\newtheorem{prop}{Proposition}
\newtheorem{rem}{Remark}
\newtheorem{lem}{Lemma}
\newcommand{\Addresses}{{
  \bigskip
  \footnotesize

KMS:
\textsc{Department of Mathematics, Johns Hopkins University, 404 Krieger Hall, 3400 N. Charles Street, Baltimore, MD 21218, USA}\par\nopagebreak
  \textit{Email address:} \texttt{kmarsh34@jh.edu}

\medskip

MT:
\textsc{Department of Mathematics, Institute of Science Tokyo, 2-12-1 Ookayama, Meguro-ku, Tokyo 152-8551, Japan}\par\nopagebreak
  \textit{Email address:} \texttt{takadamayu11@gmail.com}

\medskip

YT:
\textsc{Department of Mathematics, Institute of Science Tokyo, 2-12-1 Ookayama, Meguro-ku, Tokyo 152-8551, Japan}\par\nopagebreak
  \textit{Email address:} \texttt{tonegawa@math.titech.ac.jp}

\medskip

MW:
\textsc{Department of Mathematics, National Taiwan Normal University, No. 88, Sec.4, Ting-Chou Road, Taipei, 116059, Taiwan (R.O.C.)}\par\nopagebreak
  \textit{Email address:} \texttt{myles.workman@math.ntnu.edu.tw}
}}
\begin{document}

\title{\Large \textbf{GRADIENT FLOW OF PHASE TRANSITIONS WITH FIXED CONTACT ANGLE}}
\author{\small KOBE MARSHALL-STEVENS, MAYU TAKADA, YOSHIHIRO TONEGAWA, \& MYLES WORKMAN}
\date{\vspace{-5ex}}
\maketitle

\begin{abstract}
    \noindent We study the gradient flow of the Allen--Cahn equation with fixed boundary contact angle in Euclidean domains for initial data with bounded energy. Under general assumptions, we establish both interior and boundary convergence properties for the solutions and associated energy measures. Under various boundary non-concentration assumptions, we show that, for almost every time, the associated limiting varifolds satisfy generalised contact angle conditions and have bounded first variation, as well as deducing that the trace of the limit of the solutions coincides with the limit of their traces. Moreover, we derive an Ilmanen type monotonicity formula, for initial data with bounded energy, valid for the associated energy measures up to the boundary.
\end{abstract}

\tableofcontents

\section{Introduction}

Throughout this paper we consider the following Allen--Cahn equation with non-linear Robin boundary conditions:
\begin{equation}\label{eqn: parabolic Allen-Cahn with contact energy introduction}
    \begin{cases}
        u \colon \overline{\Omega} \times [0, + \infty) \rightarrow \mathbb{R}, \\
        \varepsilon \partial_{t} u = \varepsilon \Delta u - \frac{W'(u)}{\varepsilon} & \text{in } \Omega ,\\
        \varepsilon (\nabla u \cdot \nu) = - \sigma' (u) & \text{on } \partial \Omega,
    \end{cases}
\end{equation}
where $\Omega \subset \mathbb{R}^n$ is a bounded domain with smooth boundary, $\varepsilon \in (0,1)$, $\nu$ is the outward pointing unit normal to $\partial \Omega$, $W$ is a smooth double-well potential with strict minima at $\pm 1$, and $\sigma$ is smooth. Equation (\ref{eqn: parabolic Allen-Cahn with contact energy introduction}) arises naturally as the gradient flow of the energy
\begin{equation}\label{eqn: energy}
    E_\varepsilon(u) = \int_\Omega \frac{\varepsilon |\nabla u|^2}{2} + \frac{W(u)}{\varepsilon} dx + \int_{\partial\Omega} \sigma(u) d\mathcal{H}^{n-1},
\end{equation}
where $\mathcal{H}^{n-1}$ denotes the $(n-1)$-dimensional Hausdorff measure on $\mathbb{R}^n$; we derive this below in the calculation resulting in (\ref{eqn: time derivative of allen cahn energy}). We note here that (\ref{eqn: parabolic Allen-Cahn with contact energy introduction}) is often sped up by a factor of $\frac{1}{\varepsilon}$ in the literature (e.g.~see \cite{I93}).

\bigskip 

The energy (\ref{eqn: energy}) was proposed in \cite{C77} (and introduced into the mathematical literature in \cite[Section 5]{G85}) as a phase transition model for fluid in a container, $\Omega$, with a contact energy between the fluid and the boundary of the container, $\partial \Omega$. The $\varepsilon \rightarrow 0$ limiting behaviour was first studied in \cite{M87} in the framework of $\Gamma$-convergence (for minimisers of (\ref{eqn: energy})) and more recently in \cite{KT18} in the framework of varifolds (for general critical points). These works establish that, given a sequence, $\varepsilon_{i} \rightarrow 0$, and critical points, $\{u_{\varepsilon_{i}}\}$ (minimisers in \cite{M87} and general critical points in \cite{KT18}), of (\ref{eqn: energy}) with uniform energy bounds along this sequence, one can extract a subsequential limiting function, $u \in L^{1} (\Omega)$, taking values in $\{ \pm 1 \}$ $\mathcal{L}^n$ a.e.~in $\Omega$, where $\mathcal{L}^n$ denotes the $n$-dimensional Lebesgue measure on $\mathbb{R}^n$.
Thus, in the $\varepsilon \rightarrow 0$ limit, the domain, $\Omega$, splits into disjoint regions, given by $\{u = \pm 1\}$. Moreover, the boundary of these regions, often referred to as the phase interface, makes fixed contact angle 
\begin{equation*}
    \theta = \arccos\left(\frac{\sigma(1) - \sigma(-1)}{c_0}\right)
\end{equation*}
with the boundary, $\partial \Omega$, in an appropriate weak sense, where
\begin{equation*}
    c_0 = \int_{-1}^1 \sqrt{2W(s)}ds.
\end{equation*}
As noted in \cite{KT17}, by a heuristic argument as well as the $\Gamma$-convergence result of \cite{M87}, one expects that the energy, (\ref{eqn: energy}), provides a phase transition approximation of the Gauss free energy; in the sense that for $\varepsilon \approx 0$ we have, for some constant $C$, that
\begin{equation*}
    c_0^{-1} E_\varepsilon(u_\varepsilon) \approx \mathcal{H}^n(\Omega \cap \{u_\varepsilon = +1\}) + \cos(\theta)\cdot\mathcal{H}^{n-1}(\partial \Omega \cap \{u_\varepsilon = +1\}) + C.
\end{equation*}
Critical points of the Gauss free energy are given by sets whose boundary inside of the domain, $\Omega$, is a minimal hypersurface (precisely a stationary varifold), meeting the boundary, $\partial \Omega$, with fixed contact angle in an appropriate weak sense. Weak notions of fixed contact angle at the boundary arise naturally in several variational problems (e.g.~for minimisers of anisotropic capillarity problems as studied in \cite{DePM15}) and were first introduced in the framework of varifolds in \cite{KT17}.

\bigskip

In this paper we study the $\varepsilon \rightarrow 0$ limiting behaviour of the gradient flow of the energy (\ref{eqn: energy}) in the framework of varifolds. One main focus is the analysis of the $\varepsilon \rightarrow 0$ limiting behaviour of the solutions to (\ref{eqn: parabolic Allen-Cahn with contact energy introduction}) and their associated energy measures under the assumption that the surface energy (the interior term in (\ref{eqn: energy})) does not concentrate onto the boundary in the limit. Physically, this corresponds to precluding the phase interface from accumulating onto the boundary along the flow, a phenomena often referred to as `wetting' in the literature; for further information on this point we refer to \cite{C77} for a physical motivation as well as the examples and references discussed in \cite[Section 8]{MT15}.

\bigskip

The interior $\varepsilon \rightarrow 0$ convergence of the gradient flow of the Allen--Cahn energy to various weak notions of the mean curvature flow have been studied extensively; we refer to \cite{I93, T03, T24} and the references therein for a more complete background. Closely related to the fixed boundary contact angle condition considered here is the gradient flow of the Allen--Cahn energy with Neumann boundary conditions, arising from (\ref{eqn: energy}) when $\sigma = 0$, as studied in \cite{MT15} for convex domains and \cite{K19} for general domains. In these works it was shown (under certain technical assumptions in the latter) that the energy measures associated to the solutions converge, as $\varepsilon \rightarrow 0$, to a integer rectifiable Brakke flow with an appropriate weak notion of free boundary (i.e.~fixed $90^\circ$ contact angle at the boundary). Though we do not pursue it here, one could hope to show the corresponding analogue of these results for (\ref{eqn: energy}) when $\sigma \neq 0$; namely, showing that the interior energy measures (defined in Subsection \ref{subsec: setting} below) associated to solutions of (\ref{eqn: parabolic Allen-Cahn with contact energy introduction}) converge to a integer rectifiable Brakke flow with an appropriate weak notion of fixed contact angle at the boundary. A fundamental tool in establishing any such convergence result is a monotonicity formula, which we derive here for the energy measures associated to solutions of (\ref{eqn: parabolic Allen-Cahn with contact energy introduction}) in Appendix \ref{sec: monotonicity}.

\bigskip 

The gradient flow of (\ref{eqn: energy}) has previously been studied in a variety of contexts. By deriving the first term in the asymptotic expansion of solutions to (\ref{eqn: parabolic Allen-Cahn with contact energy introduction}), aspects of the $\varepsilon \rightarrow 0$ limiting behaviour of the flow were first studied in \cite{OS92}. More recently, in \cite[Theorem 1 (ii)]{HL21} it is shown that, under the assumption that no energy is dropped in the limit, the gradient flow of (\ref{eqn: energy}) converges, as $\varepsilon \rightarrow 0$, to a BV solution of the mean curvature flow with fixed contact angle (in the sense of \cite[Definition 1]{HL21}). In the opposite direction, in \cite{AM22, HM22}, by assuming the existence of a smooth mean curvature flow with fixed contact angle, the existence of a solution to (\ref{eqn: parabolic Allen-Cahn with contact energy introduction}) which approximates this mean curvature flow is established and the convergence rate of the approximation is analysed.

\subsection{Notation}

We collect some notations and definitions that will be used throughout the paper:

\begin{itemize}

    \item We let $\Omega \subset \mathbb{R}^n$ be an open, bounded, connected set with smooth boundary, $\partial \Omega$, and denote by $\nu$ the outward pointing unit normal to $\partial \Omega$. Let $\kappa > 0$ denote the reciprocal of the supremum of the principal curvatures of $\partial \Omega$ (which is finite and strictly positive as $\partial\Omega$ is smooth and compact) and for $0 < r \leq \kappa$ we define the interior tubular neighbourhood of $\partial \Omega$ of size $r$ to be
    $$ N_r = \{x - \lambda \nu(x) \: | \: 0 \leq \lambda < r\}.$$
    
    \item We will denote by $\nabla^\top, \Delta^\top$ and $\mathrm{div}_{\partial\Omega}$ the tangential gradient, Laplacian and divergence operators on $\partial \Omega$ respectively.

    \item We denote by $\mathbf{G}(n,n-1)$ the space of $(n-1)$-dimensional subspaces of $\mathbb{R}^n$, and identify $S \in \mathbf{G}(n,n-1)$ with the orthogonal projection of $\mathbb{R}^n$ onto $S$ and its matrix representation. For $a \in \mathbb{R}^n$ we denote by $a \otimes a \in \mathrm{Hom}(\mathbb{R}^n; \mathbb{R}^n)$ the matrix with entries $a_i a_j$ (with $1 \leq i,j \leq n$). Writing $I$ for the identity matrix we have, for each unit vector $a \in \mathbb{R}^n$, $I - a \otimes a \in \mathbf{G}(n, n-1)$. For $X \subset \mathbb{R}^n$ either open or compact we set $\mathbf{G}_{n-1}(X) = X \times \mathbf{G}(n,n-1)$. 

    \item For a Radon measure $\mu$ on a measure space $X$ and $\varphi \in C_c(X)$ we write $\mu(\varphi) = \int \varphi \, d\mu$ and let $\mathrm{spt} (\mu)$ denote the support of $\mu$. A general $(n-1)$-varifold (hereafter simply varifold) on $X$ is a Radon measure on $\mathbf{G}_{n-1}(X)$, and we denote the set of all such varifolds as $\mathbf{V}_{n-1}(X)$. For $V \in \mathbf{V}_{n-1}(X)$ we will write $||V||$ for the weight measure of $V$, i.e.~for each $\phi \in C_c(X)$ we have
    $$||V||(\phi) = \int_{\mathbf{G}_{n-1}(X)} \phi(x) \, dV(x,S),$$
    and $\delta V$ for the first variation of $V$, i.e.~for each $g \in C^1_c(X; \mathbb{R}^n)$ we have
    $$\delta V (g) = \int_{\mathbf{G}_{n-1}(X)} \nabla g(x) \cdot S \, dV(x,S).$$

\end{itemize}

\subsection{Setting}\label{subsec: setting}

We first impose the following two assumptions on $W$ and $\sigma$, these will be fixed throughout the paper:

\begin{enumerate}\MakeLinkTarget{}\label{asmp: on W}
    \item[(A1)] $W \in C^{\infty}(\mathbb{R})$ is a non-negative double-well potential with non-degenerate minima at $\pm 1$, unique local maximum in $(-1,1)$, and for some $\gamma \in (0,1)$ we have $W''(s) > 0$ for each $|s| > \gamma$.
    
    \item[(A2)]\MakeLinkTarget{}\label{asmp: on sigma} $\sigma \in C^{\infty} (\mathbb{R})$ and there exists a $c_1 \in [0, 1)$ such that for each $s \in \mathbb{R}$ we have
    \begin{equation*}
        |\sigma' (s)| \leq c_1 \sqrt{2W (s)}.
    \end{equation*}
    As the angle condition is determined by the difference $(\sigma(1) - \sigma(-1))$ and $c_0$, without loss of generality we set $\sigma(-1) = 0$ throughout.
 \end{enumerate}

\begin{rem} A typical example of a potential in \hyperref[asmp: on W]{(A1)} is $W(s) = \frac{1}{4}(1-s^2)^2$; one can then prescribe a fixed contact angle, $\theta \in (0,\frac{\pi}{2}]$, based on this potential by choosing $\sigma(s) = \cos(\theta) \int_{-1}^{s} \sqrt{2W(r)} \, dr$ to satisfy \hyperref[asmp: on sigma]{(A2)}. However, the above assumptions placed on $W$ and $\sigma$ above are rather general and allow for flexibility in applying our results. For instance, no specific choices of $W$ and $\sigma$ are required in order to approximate, in the $\varepsilon \rightarrow 0$ limit, hypersurfaces making fixed contact angle $\theta$ with $\partial \Omega$.
\end{rem}

\begin{rem}
    As remarked in \cite{KT18}, assumption \hyperref[asmp: on sigma]{(A2)} ensures that $$|\sigma(1)| \leq \int_{-1}^1 |\sigma'(s)| \, ds \leq c_1 \int_{-1}^1 \sqrt{2W(s)} \, ds < c_0,$$
    which physically corresponds to the contact energy density, $|\sigma(1)|$, of the interface, formed by the region $\{u \approx 1\}$, with $\partial \Omega$ being strictly smaller than the interior energy density, $c_0$, of the interface inside of $\Omega$. As $|\sigma(1)| \nearrow c_0$ we thus expect the `perfect wetting' phenomena described in \cite{C77}.
\end{rem}

Given $\varepsilon \in (0,1)$, $u \in C^{\infty} (\overline{\Omega} \times [0, \infty))$ solving (\ref{eqn: parabolic Allen-Cahn with contact energy introduction}), and $t \geq 0$ we define, for each $\varphi \in C_{c} (\mathbb{R}^{n})$, the associated Radon measures 
\begin{equation}\label{eqn: measure definitions}
    \begin{cases}
        \mu_{t, 1}^{\varepsilon}(\varphi) =  \int_{\Omega} \varphi \left(\frac{\varepsilon |\nabla u ( \, \cdot \, , \, t)|^2}{2} + \frac{W(u ( \, \cdot \, , \, t))}{\varepsilon} \right) \, dx,\\
        \mu_{t, 2}^{\varepsilon}(\varphi) = \int_{\partial\Omega } \varphi \, \sigma(u ( \, \cdot \, , t)) d\mathcal{H}^{n-1},\\
         \mu_{t}^{\varepsilon} (\varphi) = \mu_{t, 1}^{\varepsilon} (\varphi) + \mu_{t, 2}^{\varepsilon} (\varphi),
    \end{cases}
\end{equation}
and, for each $\phi \in C_{c} ( \mathbf{G}_{n - 1} (\mathbb{R}^{n}) )$, we define the associated varifolds
\begin{equation}\label{eqn: varifold definitions}
    \begin{cases}
        V_{t, 1}^{\varepsilon} (\phi) = \int_{\Omega \cap \{ \nabla u \not= 0\}} \phi \left( x, I - \frac{\nabla u}{|\nabla u|} \otimes \frac{\nabla u}{|\nabla u|} \right) d\mu_{t,1}^\varepsilon, \\
        V_{t, 2}^{\varepsilon} (\phi) = \int_{\partial\Omega } \phi (x, T_{x} \partial \Omega) \, d\mu_{t,2}^\varepsilon, \\
        V_{t}^{\varepsilon} (\phi) = V_{t, 1}^{\varepsilon} (\phi) + V_{t, 2}^{\varepsilon} (\phi).
    \end{cases}
\end{equation}
With the above definitions we have $||V_t^\varepsilon|| = \mu_t^\varepsilon$ and $||V_{t,j}^\varepsilon|| = \mu_{t,j}^\varepsilon$ for $j = 1,2$. Furthermore, we define the discrepancy functions, $\xi_t^\varepsilon$,  by setting
\begin{equation*}
   \xi_{t}^{\varepsilon} = \frac{\varepsilon |\nabla u( \, \cdot \,, \, t)|^{2}}{2} - \frac{W (u( \, \cdot \,, \, t))}{\varepsilon},
\end{equation*}
and write $d\xi_t^\varepsilon = \xi_t^\varepsilon d\mathcal{L}^n \lfloor_\Omega$. As we are concerned with the $\varepsilon \rightarrow 0$ limiting behaviour of solutions to (\ref{eqn: parabolic Allen-Cahn with contact energy introduction}) we will often make two further assumptions throughout the paper:

\begin{enumerate}
   \item[(A3)]\MakeLinkTarget{}\label{asmp: gradient flow} For a sequence $\{\varepsilon_{i}\} \subset (0,1)$, with $\varepsilon_{i} \rightarrow 0$, there exist $\{ u_{i} \} \subset C^{\infty} (\overline{\Omega} \times [0, \infty))$ such that 
    \begin{equation}\label{eqn: parabolic Allen-Cahn with contact energy}
    \begin{cases}
        u_{i} \colon \overline{\Omega} \times [0, + \infty) \rightarrow \mathbb{R}, \\
        \varepsilon_{i} \partial_{t} u_{i} = \varepsilon_{i} \Delta u_{i} - \frac{W'(u_{i})}{\varepsilon_{i}} & \text{in }  \Omega, \\
        \varepsilon_{i} (\nabla u_{i} \cdot \nu) = - \sigma' (u_{i}) & \text{on } \partial \Omega,
    \end{cases}
\end{equation}
where $\nu$ is the outward pointing unit normal to $\partial \Omega$.
\end{enumerate}
\begin{rem}
    We will often consider further subsequences, $\{\varepsilon_{i_j}\} \subset \{\varepsilon_i\}$, and when doing so, for each $t \geq 0$, write $u_j$ in place of $u_{i_j}$, $\mu_t^j, \mu_{t,1}^j, \mu_{t,2}^j$ in place of $\mu_t^{\varepsilon_{i_j}}, \mu_{t,1}^{\varepsilon_{i_j}}, \mu_{t,2}^{\varepsilon_{i_j}}$, and $V_t^j, V_{t,1}^j, V_{t,2}^j$ in place of $V_t^{\varepsilon_{i_j}}, V_{t,1}^{\varepsilon_{i_j}}, V_{t,2}^{\varepsilon_{i_j}}$ to denote the solutions of (\ref{eqn: parabolic Allen-Cahn with contact energy}), the Radon measures, and the varifolds associated with such a subsequence of solutions respectively.
\end{rem}

\begin{enumerate}
\item[(A4)]\MakeLinkTarget{}\label{asmp: uniform bounds} For the solutions of (\ref{eqn: parabolic Allen-Cahn with contact energy}) in \hyperref[asmp: gradient flow]{(A3)} there exists $E_{0} > 0$ such that 
\begin{equation*}
   \begin{cases}
       \sup_{i} E_{\varepsilon_{i}} (u_{i} ( \, \cdot \, , 0)) \leq E_{0},\\
       \sup_{i}||u_i(\, \cdot \, , 0)||_{L^\infty(\overline{\Omega})} \leq 1.
    \end{cases}
\end{equation*}
\end{enumerate}

Under the assumptions \hyperref[asmp: on W]{(A1)}-\hyperref[asmp: uniform bounds]{(A4)}, each $u_{i}$ is a solution to the gradient flow of (\ref{eqn: energy}) with initial data given by $u_{i} (\, \cdot \, , 0)$. To see this, observe that for each $\varphi \in C^{\infty} (\overline{\Omega})$ and $t \geq 0$ we have
\begin{equation*}
    \int_{\Omega} \varepsilon_{i} \, \partial_{t} u_{i} \, \varphi \, dx = - \int_{\Omega} \left( \varepsilon_{i} \nabla u_{i} \cdot \nabla \varphi + \frac{W' (u_{i})}{\varepsilon_{i}} \, \varphi \right) \, dx - \int_{\partial \Omega} \sigma' (u_{i}) \, \varphi \, d \mathcal{H}^{n - 1} = - \delta E_{\varepsilon_{i}} (u_{i} ( \, \cdot \, , t)) (\varphi),
\end{equation*}
and therefore
\begin{equation}\label{eqn: time derivative of allen cahn energy}
    \frac{d}{dt} E_{\varepsilon_{i}} (u_{i} ( \, \cdot \, , t)) =  \delta E_{\varepsilon_{i}} (u_{i} ( \, \cdot \, , t)) (\partial_{t} u_{i} ( \, \cdot \, , t)) = - \int_{\Omega} \varepsilon_{i} \, (\partial_{t} u_{i})^{2} \, dx \leq 0.
\end{equation}
Combining this fact with the energy bound in \hyperref[asmp: uniform bounds]{(A4)} we have
\begin{equation}\label{eqn: uniform t and i energy bounds}
    \sup_{i} \sup_{t \geq 0} E_{\varepsilon_{i}} (u_{i} ( \, \cdot \, , t)) \leq E_{0}.
\end{equation}
The uniform energy bounds imposed by (\ref{eqn: uniform t and i energy bounds}) ensure uniform (i.e.~independent of $\varepsilon$) mass bounds along the sequences of Radon measures and varifolds as defined in (\ref{eqn: measure definitions}) and (\ref{eqn: varifold definitions}) respectively. Consequently, due to the weak compactness of Radon measures, up to a subsequence, one obtains limits for these sequences of Radon measures and varifolds. We will denote the limiting Radon measures of the sequences $\{\mu_t^{\varepsilon_i}\}$ and $\{\mu_{t,j}^{\varepsilon_i}\}$ for $j = 1,2$ by $\mu_t$ and $\mu_{t,j}$ for $j = 1,2$ respectively. Similarly, we will denote the limiting varifolds of the sequences $\{V_t^{\varepsilon_i}\}$ and $\{V_{t,j}^{\varepsilon_i}\}$ for $j = 1,2$ by $V_t$ and $V_{t,j}$ for $j = 1,2$ respectively.

\begin{rem}\label{rem: a.e. convergence of the discrepancy}
     In \cite{T03} it was established that for $\mathcal{L}^1$ a.e.~$t \geq 0$ we have, under assumptions \hyperref[asmp: on W]{(A1)}-\hyperref[asmp: uniform bounds]{(A4)}, that potentially up to a further subsequence depending on $t$ (not relabelled) $|\xi_t^{\varepsilon_i}|d\mathcal{L}^n\lfloor_\Omega\rightharpoonup \xi_t$ for some Radon measure $\xi_t$ with $\mathrm{spt} (\xi_t) \subset \partial \Omega$ and $\xi_t \ll ||V_{t,1}|| \lfloor_{\partial\Omega}$ (i.e.~with $V^{\varepsilon_i}_{t,1} \rightharpoonup V_{t,1}$).
\end{rem}

\subsection{Main results}
Subject to our general assumptions we obtain:
\clearpage
\begin{thm}\label{thm: 1}
    Under assumptions \hyperref[asmp: on W]{(A1)}-\hyperref[asmp: uniform bounds]{(A4)}, the following holds:
    \begin{enumerate}
        \item There exist Radon measures, $\{\mu_t\}_{t \geq 0}$, and a subsequence, $\{\varepsilon_i\} \subset (0,1)$ (denoted by the same index), such that
        $$\mu_t^{\varepsilon_{i}} \rightharpoonup \mu_t \text{ on } \overline{\Omega}.$$

        \item For $\mathcal{L}^1$ a.e.~$t \geq 0$ there exists a function, $u(\,\cdot\,,t) \in BV(\Omega)$, and a subsequence, $\{\varepsilon_i\} \subset (0,1)$ (denoted by the same index), such that, for $\mathcal{L}^1$ a.e.~$t \geq 0$, we have
        $$\begin{cases}
            u_i(\,\cdot\,,t) \lfloor_{\Omega} \rightarrow u(\,\cdot\,,t) & \text{in } L^1(\Omega),\\
            u(\,\cdot\,,t) = \pm 1 & \mathcal{L}^n \text{ a.e.~on } \Omega,
        \end{cases}$$
        where $u_i(\,\cdot\,,t)\lfloor_\Omega$ denotes the restriction of $u_i(\,\cdot\,,t)$ to $\Omega$.

        \item For $\mathcal{L}^1$ a.e.~$t \geq 0$ there exists a subsequence, $\{\varepsilon_{i_j}\} 
        \subset \{\varepsilon_i\}$, dependent on $t \geq 0$, and a function, $\tilde{u}(\, \cdot \,, t) \in BV(\partial \Omega)$, such that
        $$\begin{cases}
        u_j(\,\cdot\,,t)\lfloor_{\partial\Omega} \rightarrow \tilde{u}(\,\cdot\,,t) & \text{in } L^1(\partial\Omega),\\
        \tilde{u}(\,\cdot\,,t) = \pm 1 & \mathcal{H}^{n-1} \text{ a.e.~on } \partial \Omega,
        \end{cases}$$
      where $u_j(\,\cdot\,,t)\lfloor_{\partial\Omega}$ denotes the restriction of $u_j(\,\cdot\,,t)$ to $\partial\Omega$.
    \end{enumerate}
\end{thm}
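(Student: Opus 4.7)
For (1), the parabolic maximum principle applied to (\ref{eqn: parabolic Allen-Cahn with contact energy}) gives $\|u_i(\,\cdot\,,t)\|_{L^\infty(\overline{\Omega})}\le 1$ for every $t\ge 0$ and $i$: this uses $W'(\pm 1)=0$ from \hyperref[asmp: on W]{(A1)} together with $\sigma'(\pm 1)=0$, forced by \hyperref[asmp: on sigma]{(A2)} since $|\sigma'(\pm 1)|\le c_1\sqrt{2W(\pm 1)}=0$. Combined with (\ref{eqn: uniform t and i energy bounds}) and smoothness of $\sigma$, this yields the uniform total variation bound $|\mu_t^{\varepsilon_i}|(\overline{\Omega})\le C$ for all $t,i$. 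For each $\varphi\in C_c^1(\mathbb{R}^n)$, a direct computation using (\ref{eqn: parabolic Allen-Cahn with contact energy}), integration by parts, and cancellation of boundary contributions via the Neumann condition yields
\[
\frac{d}{dt}\mu_t^{\varepsilon_i}(\varphi)=-\int_\Omega \varphi\,\varepsilon_i\,(\partial_t u_i)^2\,dx-\int_\Omega \varepsilon_i\,(\partial_t u_i)\,\nabla\varphi\cdot\nabla u_i\,dx,
\]
and Cauchy--Schwarz combined with (\ref{eqn: time derivative of allen cahn energy}) gives $\int_0^T\bigl|\frac{d}{dt}\mu_t^{\varepsilon_i}(\varphi)\bigr|\,dt\le C(\varphi)(1+\sqrt{T})$ uniformly in $i$. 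Fixing a countable dense set $\{\varphi_k\}\subset C_c^1(\mathbb{R}^n)$, Helly's selection theorem for each $\varphi_k$ and a diagonal extraction produce a single subsequence along which $\mu_t^{\varepsilon_i}(\varphi_k)\to f_k(t)$ for every $k$ and $t$; density of $\{\varphi_k\}$ and the uniform mass bound then promote this to $\mu_t^{\varepsilon_i}\rightharpoonup\mu_t$ for every $t\ge 0$.

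For (2), with the Modica--Mortola substitution $\Phi(s)=\int_0^s\sqrt{2W(r)}\,dr$, the pointwise inequality $|\nabla\Phi(u_i)|\le\frac{\varepsilon_i|\nabla u_i|^2}{2}+\frac{W(u_i)}{\varepsilon_i}$ combined with (\ref{eqn: uniform t and i energy bounds}) shows that $\Phi(u_i(\,\cdot\,,t))$ is bounded in $BV(\Omega)$ uniformly in $i,t$. For the temporal derivative, Cauchy--Schwarz with (\ref{eqn: time derivative of allen cahn energy}) gives
\[
\int_0^T\!\!\int_\Omega|\partial_t\Phi(u_i)|\,dx\,dt\le\Bigl(\int_0^T\!\!\int_\Omega\frac{2W(u_i)}{\varepsilon_i}\,dx\,dt\Bigr)^{1/2}\Bigl(\int_0^T\!\!\int_\Omega\varepsilon_i(\partial_t u_i)^2\,dx\,dt\Bigr)^{1/2}\le C\sqrt{T},
\]
so $\{\Phi(u_i)\}$ is uniformly bounded in $BV(\Omega\times[0,T])$. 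BV compactness, a diagonal argument over $T\in\mathbb{N}$, and a further subsequence via Fubini produce a single subsequence along which, for $\mathcal{L}^1$-a.e.\ $t\ge 0$, $\Phi(u_i(\,\cdot\,,t))\to w(\,\cdot\,,t)$ in $L^1(\Omega)$. Since $|u_i|\le 1$ and $\Phi$ is continuously invertible on $[-1,1]$, this gives $u_i(\,\cdot\,,t)\to u(\,\cdot\,,t):=\Phi^{-1}(w(\,\cdot\,,t))$ in $L^1(\Omega)$; the bound $\int_\Omega W(u_i(\,\cdot\,,t))\,dx\le C\varepsilon_i\to 0$ then forces $u(\,\cdot\,,t)=\pm 1$ $\mathcal{L}^n$-a.e., which combined with the $BV$ bound on $\Phi(u_i(\,\cdot\,,t))$ gives $u(\,\cdot\,,t)\in BV(\Omega)$.

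For (3), my plan is to mirror (2) on $\partial\Omega$, for which the decisive new ingredient is a tangential bound of the form
\[
\int_{\partial\Omega}|\nabla^\top\Phi(u_i(\,\cdot\,,t))|\,d\mathcal{H}^{n-1}\le C,
\]
uniform in $i$ and $t$ (or at least in an $L^1$-in-$t$ sense). I would derive this by combining the interior energy bound with the Neumann condition in (\ref{eqn: parabolic Allen-Cahn with contact energy}) via an integration by parts on the tubular neighbourhood $N_r$: decomposing $\nabla u_i=\nabla^\top u_i+(\nabla u_i\cdot\nu)\nu$ near the boundary, using the Neumann condition to rewrite the normal component in terms of $\sigma'(u_i)$, and slicing the interior energy over the normal coordinate. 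The strict inequality $c_1<1$ in \hyperref[asmp: on sigma]{(A2)} is precisely what permits boundary terms involving $\sigma'(u_i)$ to be absorbed into the interior energy density, closing the estimate. Granted such a tangential bound and an analogous temporal estimate from the equation restricted to $\partial\Omega$, the argument proceeds as in (2): BV compactness on $\partial\Omega\times[0,T]$, diagonalisation in $T$, Fubini giving $L^1(\partial\Omega)$ convergence for $\mathcal{L}^1$-a.e.\ $t$, and a subsequential decay of $W(u_i(\,\cdot\,,t))$ to zero in $L^1(\partial\Omega)$ forcing $\tilde u(\,\cdot\,,t)=\pm 1$ $\mathcal{H}^{n-1}$-a.e.\ on $\partial\Omega$. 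Isolating this tangential bound using only \hyperref[asmp: on W]{(A1)}--\hyperref[asmp: uniform bounds]{(A4)}, in particular without any non-concentration assumption on $\partial\Omega$, is the genuinely hard step.
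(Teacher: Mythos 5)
Your treatments of parts 1 and 2 are correct. Part 2 follows the paper's argument almost verbatim (Modica--Mortola substitution, space--time $BV$ bound from (\ref{eqn: uniform t and i energy bounds}) and (\ref{eqn: time derivative of allen cahn energy}), slicing in $t$, then $W(u_i)\to 0$ forcing $u=\pm1$). Part 1 takes a genuinely different route: the paper proves a semi-decreasing property (Lemma \ref{lem: semidecreasing property}), extracts limits on a countable dense set of times, and uses one-sided monotonicity to identify the limit at all but countably many times; you instead observe that $t\mapsto\mu_t^{\varepsilon_i}(\varphi)$ has uniformly bounded variation on $[0,T]$ (your identity $\frac{d}{dt}\mu_t^{\varepsilon_i}(\varphi)=-\int_\Omega\varphi\,\varepsilon_i(\partial_tu_i)^2-\int_\Omega\varepsilon_i(\partial_tu_i)\,\nabla\varphi\cdot\nabla u_i$ is exactly the paper's computation before completing the square) and invoke Helly plus a diagonal argument. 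Both work; your version trades the monotone-extension bookkeeping for Helly's theorem, and your explicit maximum-principle step (valid because $\sigma'(\pm1)=0$ makes $\pm1$ exact solutions, so comparison applies) is actually needed to make sense of the uniform mass bound via $\min_{[-1,1]}\sigma$, a point the paper leaves implicit.

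Part 3 contains a genuine gap, which you yourself flag: the tangential $BV$ bound on $\partial\Omega$ is asserted, not proved, and the mechanism you sketch is not the one that closes it. Slicing the interior energy over the normal coordinate in $N_r$ gives control of the energy on level sets $\{\mathrm{dist}(\cdot,\partial\Omega)=\lambda\}$ for a.e.\ $\lambda>0$, but says nothing about the trace at $\lambda=0$ precisely because the energy may concentrate on $\partial\Omega$ --- which is the scenario the theorem must handle without assumption \hyperref[asmp: measure]{(A6)}. The estimate that works is a Pohozaev/Rellich-type identity: multiply the equation by $g\cdot\nabla u$ for a vector field $g$ with $g=\nu$ on $\partial\Omega$ and integrate by parts. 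This is the first variation identity (\ref{eqn: first variation of parabolic allen cahn varifolds}) of Proposition \ref{prop: first variation of varifolds}, whose boundary term is $\int_{\partial\Omega}\bigl(\tfrac{\varepsilon|\nabla u|^2}{2}+\tfrac{W(u)}{\varepsilon}-\tfrac{\sigma'(u)^2}{\varepsilon}\bigr)(g\cdot\nu)$; the $\sigma'(u)^2/\varepsilon$ term is absorbed using \hyperref[asmp: on sigma]{(A2)} with $c_1<1$ (here you are right that $c_1<1$ is the decisive hypothesis), and the remaining interior term $\int_\Omega\varepsilon\,\partial_tu\,(g\cdot\nabla u)$ is controlled by the dissipation $\int_0^\infty\int_\Omega\varepsilon(\partial_tu)^2\le E_0$ from (\ref{eqn: time derivative of allen cahn energy}). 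Crucially this only yields the time-integrated bound $\int_0^T\int_{\partial\Omega}\bigl(\tfrac{\varepsilon|\nabla u|^2}{2}+\tfrac{W(u)}{\varepsilon}\bigr)\,d\mathcal{H}^{n-1}\,dt\le C(1+T)$ (Proposition \ref{prop: boundary energy bound}), not a bound uniform in $t$ --- your hoped-for uniform-in-$t$ estimate is not available, and this is exactly why the subsequence in part 3 must depend on $t$: one applies Fatou to get $\liminf_i\int_{\partial\Omega}(\cdots)<\infty$ for a.e.\ $t$, extracts a $t$-dependent subsequence along which the boundary energy is bounded, and only then runs the one-dimensional Modica--Mortola argument on $\partial\Omega$. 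Without this first-variation step your part 3 does not close.
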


Theorem \ref{thm: 1} tells us that, under assumptions \hyperref[asmp: on W]{(A1)}-\hyperref[asmp: uniform bounds]{(A4)}, there exists a family of unique limiting measures, $\{\mu_t\}_{t \geq 0}$, for all $t \geq 0$, a unique limiting interior function, $u(\, \cdot \, , t)$, for $\mathcal{L}^1$ a.e.~$t \geq 0$, and a limiting boundary function, $\tilde{u}(\, \cdot \,, t)$, dependent on the choice of $\mathcal{L}^1$ a.e.~$t \geq 0$. 

\bigskip

We will now impose the assumption that the discrepancy vanishes in the limit, or more precisely that it does not concentrate on the boundary, namely:
\begin{enumerate}
    \item[(A5)] \MakeLinkTarget{}\label{asmp: discrepancy}
        For the sequence $\{\varepsilon_i\} \subset 
        (0,1)$ and $\mathcal{L}^1$ a.e.~$t \geq 0$ it holds that $$|\xi_t^{\varepsilon_i}|d\mathcal{L}^n \lfloor_\Omega\rightharpoonup 0.$$
        In other words, in addition to Remark \ref{rem: a.e. convergence of the discrepancy}, we have that $\xi_t\lfloor_{\partial\Omega} = 0$ for $\mathcal{L}^1$ a.e.~$t \geq 0$.
\end{enumerate}

Subject to the discrepancy non-concentration assumption we obtain:

\begin{thm}\label{thm: 2}
    Under assumptions \hyperref[asmp: on W]{(A1)}-\hyperref[asmp: discrepancy]{(A5)}, the following holds:

    \begin{enumerate}
        \item For $\mathcal{L}^1$ a.e.~$t \geq 0$ there exists a subsequence,  $\{\varepsilon_{i_{j}}\} \subset \{\varepsilon_{i}\}$, dependent on $t \geq 0$, a limiting varifold, $V_{t, 1} \in \mathbf{V}_{n-1}(\overline{\Omega})$ (i.e.~with $V_{t, 1}^j \rightharpoonup V_{t, 1}$), and a $||V_{t, 1}||$ measurable vector field, $H_t \in L^2_{||V_{t,1}||}(\overline{\Omega}; \mathbb{R}^{n})$ (i.e.~$L^2$ with respect to $||V_{t,1}||$), such that, for each $g \in C_{c}^{1}(\mathbb{R}^n; \mathbb{R}^{n})$ with $(g \cdot \nu) = 0$ on $\partial \Omega$, we have
    \begin{equation*}
        \delta V_{t, 1} (g) + \sigma(1)\int_{\{ \tilde{u} ( \, \cdot \, , \, t) = +1\}} \mathrm{div}_{\partial \Omega} (g)\, d \mathcal{H}^{n - 1} = - \int_{\overline{\Omega}} g \cdot H_t \, d ||V_{t,1}||,
    \end{equation*}
    where $\tilde{u} ( \, \cdot \, , t)$ is as in Theorem \ref{thm: 1} part 3.
   
    \item For $\mathcal{L}^1$ a.e.~$t \geq 0$ there exists a subsequence, $\{\varepsilon_{i_{j}}\} \subset \{\varepsilon_{i}\}$, dependent on $t \geq 0$, and a limiting varifold, $V_t \in \mathbf{V}_{n-1}(\overline{\Omega})$ (i.e.~with $V_t^j \rightharpoonup V_t$), such that
    \begin{equation*}
        ||\delta V_t||(\overline{\Omega}) < \infty,
    \end{equation*}
    and for each $T > 0$ we have
    \begin{equation*}
        \int_0^T ||\delta V_t|| (\overline{\Omega}) \, dt < \infty.
    \end{equation*}
    \end{enumerate}
\end{thm}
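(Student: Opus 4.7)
The plan is to derive an explicit formula for $\delta V_t^\varepsilon(g)$ valid for $g \in C^1_c(\mathbb{R}^n;\mathbb{R}^n)$, show each term is uniformly bounded in $\varepsilon$ for $\mathcal{L}^1$ a.e.~$t$ up to a $t$-dependent subsequence, and pass to the weak varifold limit using lower semicontinuity of total variation.

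Starting from $\delta V_{t,1}^\varepsilon(g) = \int_\Omega (\mathrm{div}(g) - \nu_\varepsilon \cdot \nabla g\,\nu_\varepsilon)\, d\mu_{t,1}^\varepsilon$ with $\nu_\varepsilon = \nabla u/|\nabla u|$, I rewrite the integrand (modulo discrepancy corrections) in terms of the Modica-type expression $\mathrm{div}(g)(\tfrac{\varepsilon|\nabla u|^2}{2} + \tfrac{W(u)}{\varepsilon}) - \varepsilon\nabla u \cdot \nabla g\,\nabla u$, integrate by parts, and invoke the PDE together with the Robin BC $\varepsilon(\nabla u \cdot \nu) = -\sigma'(u)$. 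Adding $\delta V_{t,2}^\varepsilon(g) = \int_{\partial\Omega}\mathrm{div}_{\partial\Omega}(g)\sigma(u)\, d\mathcal{H}^{n-1}$ and using $\mathrm{div}_{\partial\Omega}(g^\top) = \mathrm{div}_{\partial\Omega}(g) - H_{\partial\Omega}(g \cdot \nu)$, the $\sigma(u)\mathrm{div}_{\partial\Omega}(g)$ boundary contributions cancel to produce
\begin{align*}
\delta V_t^\varepsilon(g) = {}& \int_{\partial\Omega}\sigma(u) H_{\partial\Omega}(g \cdot \nu)\, d\mathcal{H}^{n-1} + \int_\Omega \varepsilon(g \cdot \nabla u)\partial_t u\, dx \\
& {} + \int_{\partial\Omega}\Bigl[\tfrac{\varepsilon|\nabla^\top u|^2}{2} + \tfrac{2W(u) - (\sigma'(u))^2}{2\varepsilon}\Bigr] (g \cdot \nu)\, d\mathcal{H}^{n-1} + \text{(discrepancy)},
\end{align*}
where $H_{\partial\Omega} = \mathrm{div}_{\partial\Omega}(\nu)$ is the scalar mean curvature of $\partial\Omega$.

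Next I would bound each term uniformly in $\varepsilon$ for $\mathcal{L}^1$ a.e.~$t$. The $H_{\partial\Omega}$ integral is $O(\|g\|_\infty)$ since $\|\sigma(u)\|_\infty \leq |\sigma(1)|$ and $\partial\Omega$ is smooth. The interior term obeys $|\int \varepsilon(g \cdot \nabla u)\partial_t u| \leq \|g\|_\infty\sqrt{2E_0}\,\|\partial_t u(\cdot,t)\|_{L^2_\varepsilon}$, which is $L^1_t$ by the dissipation identity $\int_0^T\int\varepsilon(\partial_t u)^2\, dx\, dt \leq E_0$. Discrepancy corrections vanish in the limit by (A5). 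For the boundary bulk term, the $(2W(u) - (\sigma'(u))^2)/(2\varepsilon)$ part is controlled via the standard trace inequality applied to $F = W(u)/\varepsilon$: using $|W'(u)|^2 \leq C W(u)$ and (A2), Young's inequality gives $|\nabla F| \leq C(W(u)/\varepsilon + \varepsilon|\nabla u|^2)$ which is $L^1_x$-bounded by the energy, so $\int_{\partial\Omega}(2W - (\sigma')^2)/(2\varepsilon)\, d\mathcal{H}^{n-1} \leq C E_0$. The tangential kinetic part $\int_{\partial\Omega}\varepsilon|\nabla^\top u|^2\, d\mathcal{H}^{n-1}$ is the principal technical difficulty; I would control it in a time-averaged sense via a boundary-adapted argument using the monotonicity formula from Appendix \ref{sec: monotonicity} together with the energy dissipation.

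Combining these estimates, $|\delta V_t^\varepsilon(g)| \leq C(1 + \|\partial_t u(\cdot,t)\|_{L^2_\varepsilon})\|g\|_\infty$ for $\mathcal{L}^1$ a.e.~$t$. Extracting a $t$-dependent subsequence $\{\varepsilon_{i_j}\}$ along which $V_t^{\varepsilon_{i_j}} \rightharpoonup V_t$, lower semicontinuity of total variation under weak varifold convergence gives $\|\delta V_t\|(\overline{\Omega}) \leq \liminf_j\|\delta V_t^{\varepsilon_{i_j}}\|(\overline{\Omega}) < \infty$. The time integrability $\int_0^T\|\delta V_t\|(\overline{\Omega})\, dt < \infty$ follows by Fatou's lemma combined with the Cauchy--Schwarz bound $\int_0^T\|\partial_t u\|_{L^2_\varepsilon}\, dt \leq \sqrt{T E_0}$. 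The main obstacle is the control of $\int_{\partial\Omega}\varepsilon|\nabla^\top u|^2\, d\mathcal{H}^{n-1}$ in a time-averaged sense; unlike the other terms this does not follow directly from the energy identity and requires a careful boundary-adapted argument.
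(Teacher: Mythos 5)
Your overall architecture (first variation identity for $V_t^{\varepsilon}$, term-by-term bounds, then lower semicontinuity of $\|\delta V_t\|$ under varifold convergence) matches the paper's, and your identity is essentially the paper's Proposition \ref{prop: first variation of varifolds}. However, there are two genuine gaps. The first is the boundary energy term. Your trace-inequality argument for $\int_{\partial\Omega} W(u)/\varepsilon$ does not work: with $F = W(u)/\varepsilon$ one has $|\nabla F| = |W'(u)||\nabla u|/\varepsilon$, and no choice of weight in Young's inequality yields $|\nabla F| \leq C(W(u)/\varepsilon + \varepsilon|\nabla u|^2)$ — you are forced into either $W(u)/\varepsilon^3$ or $|\nabla u|^2/\varepsilon$, each of which is $\varepsilon^{-2}$ times an energy density and hence not uniformly $L^1$. (Indeed, only $|\nabla \Phi(u)| = \sqrt{2W(u)}|\nabla u|$ is controlled by the energy, which is why traces of $w=\Phi(u)$, not of $W(u)/\varepsilon$, are controllable.) The tangential term $\int_{\partial\Omega}\varepsilon|\nabla^\top u|^2$, which you defer to an unspecified monotonicity argument, suffers the same issue. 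The paper's resolution (Proposition \ref{prop: boundary energy bound}) is much more direct and you should adopt it: test the first variation identity with a vector field equal to $\nu$ on $\partial\Omega$; the boundary term then produces the \emph{full} boundary energy $\int_{\partial\Omega}\frac{\varepsilon|\nabla u|^2}{2}+\frac{W(u)}{\varepsilon}-\frac{\sigma'(u)^2}{\varepsilon}$, and since $\frac{\varepsilon|\nabla u|^2}{2}\geq\frac{\sigma'(u)^2}{2\varepsilon}$ on $\partial\Omega$ and $\sigma'(u)^2\leq 2c_1^2 W(u)$ with $c_1<1$ by \hyperref[asmp: on sigma]{(A2)}, the negative term is absorbed, leaving $(1-c_1^2)$ times the boundary energy bounded by $-\frac{1}{2}\frac{d}{dt}E_\varepsilon + C$. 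Integrating in $t$ gives $\int_0^T\int_{\partial\Omega}(\cdots)\,dt\leq C(1+T)$, and Fatou then gives finiteness of the $\liminf$ for a.e.~$t$ along a $t$-dependent subsequence — which is exactly why the theorem is stated for a.e.~$t$ along subsequences, and why the time integral in part 2 is $C(1+T)$ rather than the $C\sqrt{T}$ your dissipation bound alone would suggest.

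The second gap is that part 1 of the theorem is essentially not addressed. Two observations make it tractable. First, for the tangential test fields $g$ of part 1 one has $(g\cdot\nu)=0$, so the problematic boundary term in the first variation identity \emph{vanishes identically} and no boundary energy control is needed there at all; the discrepancy terms vanish by \hyperref[asmp: discrepancy]{(A5)}, and $\delta V_{t,2}^{j}(g)\to\sigma(1)\int_{\{\tilde u(\cdot,t)=+1\}}\mathrm{div}_{\partial\Omega}(g)$ by the $L^1(\partial\Omega)$ convergence of Theorem \ref{thm: 1} part 3 and dominated convergence. Second, the vector field $H_t$ must be constructed: the functionals $L_j(g)=\int_\Omega\varepsilon_{i_j}\partial_t u_j\,(g\cdot\nabla u_j)$ satisfy $|L_j(g)|\leq c(t)\bigl(\int|g|^2\,d\|V_{t,1}^j\|\bigr)^{1/2}$ by Cauchy--Schwarz, so a subsequential weak limit $L$ satisfies $\|L\|\ll\|V_{t,1}\|$; Radon--Nikodym then yields $H_t$, and passing the Cauchy--Schwarz bound to the limit gives $H_t\in L^2_{\|V_{t,1}\|}$. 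Without these steps your proposal establishes only a bound on $|\delta V_t(g)|$, not the contact-angle identity or the existence and integrability of the generalised mean curvature.
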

Theorem \ref{thm: 2} tells us that, under assumptions \hyperref[asmp: on W]{(A1)}-\hyperref[asmp: discrepancy]{(A5)}, for $\mathcal{L}^1$ a.e.~$t \geq 0$ there is an interior limiting varifold, $V_{t, 1}$, which is a varifold of fixed contact angle, given by $\theta = \arccos(\frac{\sigma (1)}{c_0})$, with respect to the set $\{\tilde{u} ( \, \cdot \, , t) = 1\} \subset \partial \Omega$, and that there is a limiting varifold, $V_t$, with bounded first variation which is also integrable in time.

\begin{rem}\label{rem: differing definition}
    We note that our definition of  varifold with fixed contact angle slightly differs from that of \cite[Definition 3.1]{KT17} as we allow for our varifold to have measure on the boundary $\partial \Omega$. 
\end{rem}

\begin{rem}
One expects that assumption \hyperref[asmp: discrepancy]{(A5)} can be deduced explicitly, as was done for gradient flow of the Allen--Cahn energy in the Neumann case in \cite{MT15}, from which the results of Theorem \ref{thm: 2} would hold with assumptions \hyperref[asmp: on W]{(A1)}-\hyperref[asmp: uniform bounds]{(A4)} only; we have not yet been able to show this.
\end{rem}

Under a further assumption that none of the interior measure, $\mu_{t,1}^\varepsilon = ||V_{t,1}^\varepsilon||$, accumulates on the boundary in the limit, we are able to deduce stronger conclusions about the behaviour of the limiting functions and the first variation of the limiting varifolds. Precisely, we assume that:

\begin{enumerate}
    \item[(A6)]\MakeLinkTarget{}\label{asmp: measure} For $\mathcal{L}^1$ a.e.~$t \geq 0$ and for each $s > 0$, there exists a $\delta > 0$, with
    \begin{equation*}
        \limsup_{i \rightarrow \infty}|| V_{t, 1}^{\varepsilon_i} || (N_{\delta}) < s,
    \end{equation*}
    where $N_{\delta}$ denotes the interior tubular neighbourhood of $\partial \Omega$ of size $\delta$.
\end{enumerate}

\begin{rem}
As $\xi_t \ll ||V_{t,1}||\lfloor_{\partial \Omega}$ (see Remark \ref{rem: a.e. convergence of the discrepancy}), assumption \hyperref[asmp: measure]{(A6)} is stronger than assumption \hyperref[asmp: discrepancy]{(A5)}.
\end{rem}

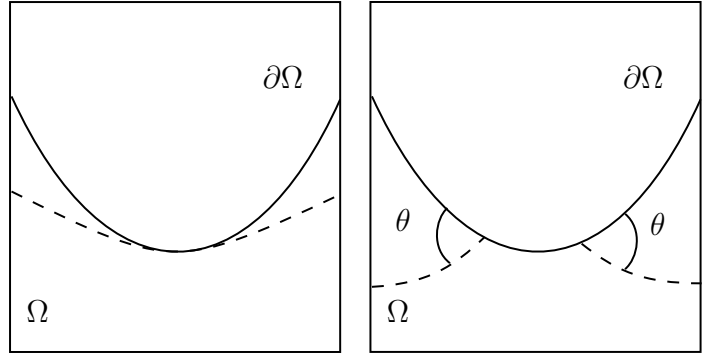
\begin{figure}[H]
\centering
\begin{tikzpicture}[x=0.75pt,y=0.75pt,yscale=-1.5,xscale=1.5]

\draw    (189.5,91.25) .. controls (221.5,161.25) and (269.5,160.25) .. (300,92.25) ;
\draw  [dash pattern={on 4.5pt off 4.5pt}]  (189.5,123.25) .. controls (243,149.75) and (245,150.25) .. (300,124.25) ;
\draw   (189,59.5) -- (300,59.5) -- (300,177.25) -- (189,177.25) -- cycle ;

\draw (273,79.9) node [anchor=north west][inner sep=0.75pt]    {$\partial \Omega $};
\draw (193.5,158.9) node [anchor=north west][inner sep=0.75pt]    {$\Omega $};

\end{tikzpicture}
\hspace{1mm}
\begin{tikzpicture}[x=0.75pt,y=0.75pt,yscale=-1.5,xscale=1.5]

\draw    (209.5,111.25) .. controls (241.5,181.25) and (289.5,180.25) .. (320,112.25) ;
\draw   (209,79.5) -- (320,79.5) -- (320,197.25) -- (209,197.25) -- cycle ;
\draw  [dash pattern={on 4.5pt off 4.5pt}]  (280,160.83) .. controls (291.4,169.8) and (300.2,174.2) .. (320.33,174.17) ;
\draw  [dash pattern={on 4.5pt off 4.5pt}]  (209.5,175.35) .. controls (228.6,174.2) and (236.68,168.7) .. (247.4,158.6) ;
\draw    (294.64,150.8) .. controls (300.77,156) and (298.91,165.8) .. (294.6,169.4) ;
\draw    (235.74,167.39) .. controls (229.36,162.5) and (230.74,152.61) .. (234.86,148.81) ;

\draw (293,99.9) node [anchor=north west][inner sep=0.75pt]    {$\partial \Omega $};
\draw (213.5,178.9) node [anchor=north west][inner sep=0.75pt]    {$\Omega $};
\draw (216.4,148.2) node [anchor=north west][inner sep=0.75pt]    {$\theta $};
\draw (302,149.4) node [anchor=north west][inner sep=0.75pt]    {$\theta $};
\end{tikzpicture}
\captionsetup{justification=justified,margin=1cm}
\caption{In both graphics the interior solid curves depict a portion of $\partial \Omega$, and the dashed curves depict smooth boundaries of interface regions in the interior of $\Omega$. The left-hand graphic depicts a positive time along the flow at which the interface comes into tangential contact with $\partial \Omega$, i.e.~the interface `wetting'. The right-hand graphic depicts the interface at any later positive time after which the `popping' of the interface has occurred, subject to the assumption that the interior measures do not concentrate on $\partial \Omega$.} \label{fig: measure non-concentration}
\end{figure}

Intuitively, this assumption ensures that the interior portions of any limiting varifold, $V_{t,1}$, do not tangentially touch, or lie within, the boundary along the flow. Indeed, since the weight measures of the varifolds, $V_{t,1}^\varepsilon$, are expected to behave like hypersurface measures of moving phase boundaries, this ensures that if such a hypersurface were to touch the boundary from the interior at some point in time, then the hypersurface would split at this point of the boundary into two distinct parts instantaneously. Physically, this precludes any `wetting' occurring along the flow, as discussed above, and corresponds to a `popping' of the interface upon tangential contact with the boundary; see Figure \ref{fig: measure non-concentration}.

\begin{rem}\label{rem: restriction of first variation}
    In view of Theorem \ref{thm: 2} part 1, for each interior limiting varifold, $V_{t,1} \in \mathbf{V}_{n-1}(\overline{\Omega})$ (i.e.~with $V_{t,1}^j \rightharpoonup V_{t,1}$), for $g \in C_c^1(\mathbb{R}^n;\mathbb{R}^n)$ with $(g \cdot \nu) = 0$ on $\partial \Omega$ we let
    \begin{equation*}
        (\delta V_{t,1})\lfloor_{A}(g) = -\sigma(1) \int_{\{\tilde{u}(\, \cdot \, , \, t) = +1\} \cap A} \mathrm{div}_{\partial\Omega} (g)\, d \mathcal{H}^{n - 1} - \int_{\overline{\Omega} \cap A} g \cdot H_t \, d||V_{t,1}||
    \end{equation*}
    denote the restriction of the first variation to a measurable set $A \subset \mathbb{R}^n$.
\end{rem}

Subject to the measure non-concentration assumption we obtain:

\begin{thm}\label{thm: 3} Under assumptions \hyperref[asmp: on W]{(A1)}-\hyperref[asmp: measure]{(A6)}, the following holds:
\begin{enumerate}
    \item For $\mathcal{L}^1$ a.e.~$t \geq 0$ there exists a subsequence, $\{\varepsilon_{i_j}\} \subset \{\varepsilon_i\}$, dependent on $t \geq 0$, such that
\begin{equation*}
\begin{cases}
    u_{j}(\,\cdot\,,t) \lfloor_{\partial\Omega} \rightarrow Tu(\,\cdot\,,t) & \text{in } L^{1} (\partial \Omega), \\
    Tu(\,\cdot\,,t) = \pm 1 & \mathcal{H}^{n-1} \text{ a.e.~on } \partial \Omega,
\end{cases}
\end{equation*}
where $Tu(\,\cdot\,,t) \in L^1(\partial\Omega)$ arises as the trace of $u(\,\cdot\,,t)$ as in Theorem \ref{thm: 1} part 2. 
Moreover, we have that $Tu ( \, \cdot \, , t) \in BV (\partial \Omega)$.

    \item For $\mathcal{L}^1$ a.e.~$t \geq 0$ there exists a subsequence, $\{\varepsilon_{i_{j}}\} \subset \{\varepsilon_{i}\}$, dependent on $t \geq 0$, and a limiting varifold, $V_{t,1} \in \mathbf{V}_{n-1}(\overline{\Omega})$ (i.e.~with $V_{t,1}^j \rightharpoonup V_{t,1}$), such that for each $g \in C_c^1(\mathbb{R}^n;\mathbb{R}^n)$ with $(g \cdot \nu) = 0$ on $\partial \Omega$, we have 
    \begin{equation}
        (\delta V_{t,1})\lfloor_{\partial\Omega}(g) = -\sigma(1) \int_{\partial^*\{Tu(\,\cdot\,,t) = +1\}} g \cdot \vec{n}_{\{Tu(\,\cdot\,,t) = +1\}} \, d\mathcal{H}^{n-2},
    \end{equation}
    where $\vec{n}_{\{Tu(\,\cdot\, , \, t) = +1\}}$ is the measure theoretic outward pointing normal to $\partial^*\{Tu(\,\cdot\,, \, t) = +1\}$.
\end{enumerate}
\end{thm}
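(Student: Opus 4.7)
The plan splits along the two parts of the theorem. For Part 1, I would identify the boundary trace $Tu(\cdot,t)$ of the interior limit $u(\cdot,t)$ from Theorem \ref{thm: 1} Part 2 with the $L^1$-limit $\tilde{u}(\cdot,t)$ of $u_j|_{\partial\Omega}$ from Theorem \ref{thm: 1} Part 3; the remaining assertions of Part 1 then follow directly from Theorem \ref{thm: 1} Part 3. For Part 2, once Part 1 is in hand, the identity is a consequence of Theorem \ref{thm: 2} Part 1 together with the vanishing of $\|V_{t,1}\|$ on $\partial\Omega$ under \hyperref[asmp: measure]{(A6)}, combined with the divergence theorem on the smooth closed $(n-1)$-manifold $\partial\Omega$ applied to $\{Tu(\cdot,t) = +1\}$.

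For the identification $\tilde{u}(\cdot,t) = Tu(\cdot,t)$, fix $t$ for which Theorem \ref{thm: 1} Parts 2 and 3 and \hyperref[asmp: measure]{(A6)} all hold and pass to a common subsequence $\{u_j\}$. Set $\Phi(s) = \int_{-1}^{s} \sqrt{2W(r)}\,dr$, so that $\Phi(-1)=0$ and $\Phi(+1)=c_0$, and note the pointwise inequality $|\nabla \Phi(u_j)| = \sqrt{2W(u_j)}\,|\nabla u_j| \leq \tfrac{\varepsilon_j|\nabla u_j|^2}{2} + \tfrac{W(u_j)}{\varepsilon_j}$. Parametrising $N_\kappa$ by the normal exponential map, the fundamental theorem of calculus along the inward normal together with Fubini and the uniform two-sided Jacobian bound on $N_\kappa$ yields, for each $0 < \lambda < \kappa$,
\begin{equation*}
    \int_{\partial\Omega} \bigl|\Phi(u_j)(y,t) - \Phi(u_j)(y - \lambda\nu(y),t)\bigr|\, d\mathcal{H}^{n-1}(y) \leq C\, \|V_{t,1}^{j}\|(N_\lambda),
\end{equation*}
for a constant $C$ depending only on $\partial\Omega$. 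Given $s > 0$, pick $\delta > 0$ as in \hyperref[asmp: measure]{(A6)} and a good $\lambda \in (0,\delta)$ along which, up to a further subsequence extracted from the $L^1(\Omega)$-convergence of Theorem \ref{thm: 1} Part 2 via Fubini, $u_j(\cdot - \lambda\nu, t) \to u(\cdot - \lambda\nu, t)$ in $L^1(\partial\Omega)$. Using the uniform $L^\infty$-bound on $\{u_j\}$ (which follows from the parabolic maximum principle applied to \eqref{eqn: parabolic Allen-Cahn with contact energy} with \hyperref[asmp: uniform bounds]{(A4)}, noting that $\sigma'(\pm 1)=0$ by \hyperref[asmp: on sigma]{(A2)} makes $\pm 1$ stationary solutions), the corresponding $\Phi$-convergences hold in $L^1(\partial\Omega)$ by dominated convergence, so passing $j \to \infty$ gives
\begin{equation*}
    \int_{\partial\Omega} \bigl|\Phi(\tilde{u})(y,t) - \Phi(u)(y - \lambda\nu(y), t)\bigr|\, d\mathcal{H}^{n-1}(y) \leq Cs.
\end{equation*}
Sending $\lambda \to 0^+$ along good values, $u(\cdot - \lambda\nu, t) \to Tu(\cdot,t)$ in $L^1(\partial\Omega)$ by definition of the BV trace, so $\int_{\partial\Omega}|\Phi(\tilde{u}) - \Phi(Tu)|\, d\mathcal{H}^{n-1} \leq Cs$; as $s$ is arbitrary and $\Phi$ separates $\pm 1$, we conclude $\tilde{u}(\cdot,t) = Tu(\cdot,t)$ $\mathcal{H}^{n-1}$-a.e.

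For Part 2, first note $\|V_{t,1}\|(\partial\Omega) = 0$ under \hyperref[asmp: measure]{(A6)}: for any $s > 0$, taking $\delta$ as in the assumption and observing that $N_\delta$ is open in $\overline{\Omega}$, the portmanteau theorem gives $\|V_{t,1}\|(N_\delta) \leq \liminf_j \|V_{t,1}^{j}\|(N_\delta) \leq s$, while $\partial\Omega \subset N_\delta$. Applying Theorem \ref{thm: 2} Part 1 and Remark \ref{rem: restriction of first variation}, and using $\tilde{u} = Tu$ from Part 1,
\begin{equation*}
    (\delta V_{t,1})\lfloor_{\partial\Omega}(g) = -\sigma(1) \int_{\{Tu(\cdot,t) = +1\}} \mathrm{div}_{\partial\Omega}(g)\, d\mathcal{H}^{n-1} - \int_{\partial\Omega} g \cdot H_t\, d\|V_{t,1}\|
\end{equation*}
for any $g \in C_c^1(\mathbb{R}^n;\mathbb{R}^n)$ with $(g \cdot \nu) = 0$ on $\partial\Omega$. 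The last term vanishes by $\|V_{t,1}\|(\partial\Omega) = 0$. Since $\{Tu(\cdot,t) = +1\}$ has finite perimeter in $\partial\Omega$ by Part 1 and $g$ is tangent to $\partial\Omega$, the divergence theorem on the smooth closed $(n-1)$-manifold $\partial\Omega$ converts the remaining integral into $\int_{\partial^*\{Tu(\cdot,t) = +1\}} g \cdot \vec{n}_{\{Tu(\cdot,t) = +1\}}\, d\mathcal{H}^{n-2}$, yielding the claim. The principal technical obstacle is the slicing argument in Part 1: working with $\Phi(u_j)$ rather than $u_j$ is essential because the energy only controls $\sqrt{2W(u_j)}|\nabla u_j|$ in $L^1$ and not $|\nabla u_j|$ as $\varepsilon_j \to 0$, and \hyperref[asmp: measure]{(A6)} has to be translated into a quantitative $L^1(\partial\Omega)$-closeness of $\tilde{u}$ and $u(\cdot - \lambda\nu, t)$ uniformly in small $\lambda$ before one can take $\lambda \to 0^+$ to reach the trace.
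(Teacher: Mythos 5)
Your argument is correct, and Part 1 takes a genuinely different route from the paper. The paper identifies $\tilde u(\,\cdot\,,t)$ with $Tu(\,\cdot\,,t)$ by testing the Gauss--Green trace formula for $w=\Phi(u)\in BV(\Omega)$ against a vector field $X$ with $(X\cdot\nu)=f$ on $\partial\Omega$ and $\mathrm{spt}(X)\cap\Omega\subset N_\delta$, then killing the interior contributions via the bounds $|Dw(\,\cdot\,,t)|(N_\delta)\leq\liminf_j\|V_{t,1}^j\|(N_\delta)$ and $\int_{N_\delta}|\nabla w_{i_j}|\leq\|V_{t,1}^j\|(N_\delta)$ together with \hyperref[asmp: measure]{(A6)}; your version replaces the integration by parts with the fundamental theorem of calculus along normal segments plus Fubini, comparing $\Phi(u_j)|_{\partial\Omega}$ directly with $\Phi(u_j)(\,\cdot\,-\lambda\nu)$ and then letting $\lambda\to0^+$ via the $L^1$ convergence of inner parallel traces of a $BV$ function. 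Both arguments localise to $N_\delta$ and use \hyperref[asmp: measure]{(A6)} in the same essential way; yours is more hands-on and avoids constructing the extension vector field, at the cost of the extra Fubini/subsequence bookkeeping to select good slices $\lambda$ and of invoking the parallel-surface characterisation of the $BV$ trace rather than the Gauss--Green one. Your Part 2 is essentially the paper's proof (the paper additionally verifies, via a cutoff $\eta_\delta$, that $(\delta V_{t,1})\lfloor_\Omega(g)=-\int_\Omega g\cdot H_t\,d\|V_{t,1}\|$ for all admissible $g$, which your direct appeal to the definition in Remark \ref{rem: restriction of first variation} with $A=\partial\Omega$ sidesteps). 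One caveat: your dominated-convergence step leans on a uniform $L^\infty$ bound for $u_j(\,\cdot\,,t)$ at positive times, justified only by a one-line appeal to the maximum principle; for the nonlinear Robin condition $\varepsilon(\nabla u\cdot\nu)=-\sigma'(u)$ this is more delicate than you suggest, since \hyperref[asmp: on sigma]{(A2)} does not pin down the sign of $\sigma'$ just outside $[-1,1]$ and a boundary maximum above $+1$ is not immediately excluded. The paper implicitly relies on the same bound elsewhere (e.g.~in Proposition \ref{prop: boundary energy bound}), so this is not a gap peculiar to your argument, but you could avoid the issue entirely by running your slicing estimate at the level of $w_j=\Phi(u_j)$ throughout, where the $L^1$ convergences of Theorem \ref{thm: 1} parts 2 and 3 are already established without any $L^\infty$ control.
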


Theorem \ref{thm: 3} tells us that, under assumptions \hyperref[asmp: on W]{(A1)}-\hyperref[asmp: measure]{(A6)}, for $\mathcal{L}^1$ a.e.~$t \geq 0$, given the functions $u ( \, \cdot \, , t)$ and $\tilde{u} ( \, \cdot \, , t)$ from Theorem \ref{thm: 1}, we have $T u = \tilde{u}$ on $\partial \Omega$, where $Tu$ is the $BV$ trace of $u$ on $\partial \Omega$ (e.g.~see \cite[Theorem 5.6]{EG15}), and that there is an interior limiting varifold, $V_{t,1}$, which makes contact angle $\theta = \arccos(\frac{\sigma (1)}{c_0})$ (in the sense of \cite[Definition 3.1]{KT17}, as opposed to in Theorem \ref{thm: 2} part 1; see Remark \ref{rem: differing definition}) with respect to the set $\{Tu(\,\cdot\,, t) = +1\}$; see Figure \ref{fig: contact angle}.

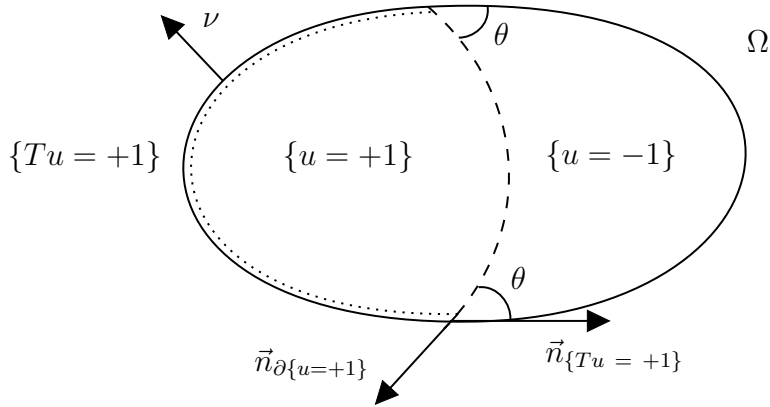
\begin{figure}[H]
\centering
\begin{tikzpicture}[x=0.75pt,y=0.75pt,yscale=-1.25,xscale=1.25]
\draw  [dash pattern={on 4.5pt off 4.5pt}]  (333,103.35) .. controls (373.4,138.2) and (377.2,195.25) .. (339.4,233.4) ;
\draw    (356,230.35) .. controls (193.4,235.8) and (201.8,108.6) .. (333,103.35) ;
\draw    (333,103.35) .. controls (509,94.6) and (491,225.4) .. (356,230.35) ;
\draw    (345,227.4) -- (313.82,261.58) ;
\draw [shift={(311.8,263.8)}, rotate = 312.37] [fill={rgb, 255:red, 0; green, 0; blue, 0 }  ][line width=0.08]  [draw opacity=0] (8.93,-4.29) -- (0,0) -- (8.93,4.29) -- cycle    ;
\draw    (341.8,230.2) -- (403.2,230.2) ;
\draw [shift={(406.2,230.2)}, rotate = 180] [fill={rgb, 255:red, 0; green, 0; blue, 0 }  ][line width=0.08]  [draw opacity=0] (8.93,-4.29) -- (0,0) -- (8.93,4.29) -- cycle    ;
\draw    (250.22,133.23) -- (227.46,109.18) ;
\draw [shift={(225.4,107)}, rotate = 46.59] [fill={rgb, 255:red, 0; green, 0; blue, 0 }  ][line width=0.08]  [draw opacity=0] (8.93,-4.29) -- (0,0) -- (8.93,4.29) -- cycle    ;
\draw  [dash pattern={on 0.84pt off 2.51pt}]  (345,227.4) .. controls (201,229.75) and (205,110.25) .. (336.2,105.4) ;
\draw    (353.4,216.6) .. controls (361.4,215.8) and (366.6,223.8) .. (366.2,229.4) ;
\draw    (357,103.02) .. controls (358.17,110.97) and (351.01,115.94) .. (345.4,115.8) ;

\draw (460.4,111.6) node [anchor=north west][inner sep=0.75pt]    {$\partial\Omega $};
\draw (379.2,236.6) node [anchor=north west][inner sep=0.75pt]    {$\vec{n}_{\{Tu\ =\ +1\}}$};
\draw (162.3,156.6) node [anchor=north west][inner sep=0.75pt]    {$\{Tu=+1\}$};
\draw (273.2,156.6) node [anchor=north west][inner sep=0.75pt]    {$\{u=+1\}$};
\draw (379.2,156.2) node [anchor=north west][inner sep=0.75pt]    {$\{u=-1\}$};
\draw (365.2,207.2) node [anchor=north west][inner sep=0.75pt]    {$\theta $};
\draw (240.8,104.7) node [anchor=north west][inner sep=0.75pt]    {$\nu $};
\draw (261.8,239.9) node [anchor=north west][inner sep=0.75pt]    {$\vec{n}_{\partial \{u=+1\}}$};
\draw (358.4,108.8) node [anchor=north west][inner sep=0.75pt]    {$\theta $};

\end{tikzpicture}
\captionsetup{justification=justified,margin=1cm}
\caption{The thicker dashed line in the graphic depicts a smooth boundary of the region $\{u = +1\}$ for the limiting function $u$ (at some fixed time) which makes contact angle $\theta$ with respect to the set $\{Tu = + 1\}$, depicted with the thin dashed line, in $\partial \Omega$. The angle $\theta$ corresponds to the angle formed between the unit inward pointing co-normal, $-\vec{n}_{\partial\{u = +1\}}$, to the boundary of $\{u = +1\}$ in $\partial \Omega$, and the outward pointing unit normal, $\vec{n}_{\{Tu = +1\}}$, to $\{Tu = +1\}$, which is tangent to $\partial \Omega$.}
\label{fig: contact angle}
\end{figure}

\begin{rem}\label{rem: canonical measures under non-concentration}
    Under assumption \hyperref[asmp: measure]{(A6)}, for $\mathcal{L}^1$ a.e.~$t \geq 0$ we have $||V_{t,1}^{\varepsilon_i}|| \rightharpoonup \mu_t\lfloor_{\Omega}$,  $||V_{t,2}^{\varepsilon_i}|| \rightharpoonup \mu_t\lfloor_{\partial\Omega}$, and, by \cite{I93,T03}, that the measures $\mu_t\lfloor_{\Omega}$ are rectifiable. For such $\mathcal{L}^1$ a.e.~$t \geq 0$ we have a unique rectifiable interior limiting varifold, $V_{t,1} \in \mathbf{V}_{n-1}(\Omega)$, with $V_{t,1}^{\varepsilon_i} \rightharpoonup V_{t,1}$ and $||V_{t,1}|| = \mu_t\lfloor_{\Omega}$; it also follows from \cite{T03} that for $\mathcal{L}^1$ a.e.~$t \geq 0$ the varifold $c_0^{-1}V_{t,1} \in \mathbf{V}_{n-1}(\Omega)$ is integer rectifiable. We also ensure that, for the same $\mathcal{L}^1$ a.e.~$t \geq 0$ as above, we have unique boundary limiting varifolds, $V_{t,2} \in \mathbf{V}_{n-1}(\overline{\Omega})$, by setting $V_{t,2} = \sigma(1) \mathcal{H}^{n-1}\lfloor_{\{Tu = +1\}}$; the dominated convergence theorem shows that $V_{t,2}^{\varepsilon_{i_j}} \rightharpoonup V_{t,2}$ for each subsequence, $\{\varepsilon_{i_j}\} \subset \{\varepsilon_i\}$, such that $u_{j}(\,\cdot\,,t) \lfloor_{\partial\Omega} \rightarrow Tu(\,\cdot\,,t)$ in $L^1(\partial\Omega)$. Thus, for the same $\mathcal{L}^1$ a.e.~$t \geq 0$ as above, we have uniquely defined limiting varifolds, $V_t \in \mathbf{V}_{n-1}(\overline{\Omega})$, with $V_t^{\varepsilon_i} \rightharpoonup V_t$ by setting $V_t = V_{t,1} + V_{t,2}$. In general, without the rectifiability of the limit measure, $\mu_t$, for $\mathcal{L}^1$ a.e.~$t \geq 0$ (which followed from \hyperref[asmp: measure]{(A6)}) it does not appear to be enough to guarantee the uniqueness of the limiting varifolds, $V_{t,1}$, $V_{t,2}$ or $V_t$; see Appendix \ref{sec: limiting measures} for further discussion on this point. We note that the uniqueness of the limiting varifolds referred to above is implicitly understood to be with respect to dependence on the subsequence, $\{\varepsilon_i\} \subset (0,1)$, and not on a specific choice of $t \geq 0$ (for instance, as opposed to the subsequences chosen for Theorem \ref{thm: 1} part 3 and Theorems \ref{thm: 2} and \ref{thm: 3}).
\end{rem}

Our results indicate that solutions of (\ref{eqn: parabolic Allen-Cahn with contact energy introduction}) should converge, as $\varepsilon \rightarrow 0$, to an appropriate weak notion of mean curvature flow with fixed contact angle. Motivated by this, we also derive the appropriate Ilmanen type monotonicity formula for the energy measures associated to solutions of (\ref{eqn: parabolic Allen-Cahn with contact energy introduction}) in Appendix \ref{sec: monotonicity}, which should be useful in the future study of the limiting behaviour of the gradient flow of (\ref{eqn: energy}). Under appropriate control of the discrepancy term (e.g.~under the non-concentration assumptions, \hyperref[asmp: discrepancy]{(A5)} and \hyperref[asmp: measure]{(A6)}) that appears, one would hope to utilise this monotonicity formula in order to obtain (in the vein of \cite{MT15}) refined convergence, as $\varepsilon \rightarrow 0$, of the solutions to the support of the limiting varifolds.

\bigskip

The rest of the paper is structured as follows. In Section \ref{sec: general behaviour} we prove Theorem \ref{thm: 1}. In Section \ref{sec: assumption behaviour} we establish, under the assumptions of discrepancy and measure non-concentration, that Theorem \ref{thm: 2} and Theorem \ref{thm: 3} hold respectively. In Appendix \ref{sec: limiting measures} we include an approach towards establishing uniqueness of the limiting Radon measures. We conclude the paper with Appendix \ref{sec: monotonicity} in which we establish an Ilmanen type monotonicity formula, for initial data with bounded energy, valid for the energy measures associated to solutions of (\ref{eqn: parabolic Allen-Cahn with contact energy introduction}) up to the boundary under the assumption that the boundary energy is non-negative.

\section{Boundary behaviour in general}\label{sec: general behaviour}

In this section we establish Theorem \ref{thm: 1}. Unless stated otherwise, we assume that assumptions \hyperref[asmp: on W]{(A1)}-\hyperref[asmp: uniform bounds]{(A4)} hold throughout this section. 

\bigskip

We first establish the following semi-decreasing property for the energy measures associated to solutions of (\ref{eqn: parabolic Allen-Cahn with contact energy introduction}):

\begin{lem}\label{lem: semidecreasing property}
Suppose for some $\varepsilon \in (0,1)$ that $u \in C^\infty(\overline{\Omega} \times [0,\infty))$ is a solution to (\ref{eqn: parabolic Allen-Cahn with contact energy introduction}) with $E_\varepsilon(u(\, \cdot \,, 0)) \leq E_0$. Then, for each $\phi \in C^2(\overline{\Omega};\mathbb{R}^+)$, the function
$$t \mapsto \int_{\overline{\Omega}} \phi \, d\mu_t^\varepsilon - E_0 ||\phi||_{C^2(\overline{\Omega})}t$$
is a monotone decreasing for $t \geq 0$.
\end{lem}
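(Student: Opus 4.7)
The plan is to show that $\frac{d}{dt}\mu_t^\varepsilon(\phi) \leq E_0 \|\phi\|_{C^2(\overline{\Omega})}$ pointwise in $t$, from which the semi-decreasing property follows by integration. This is the $\phi$-weighted analogue of the energy identity (\ref{eqn: time derivative of allen cahn energy}), combined with a Cauchy--Schwarz argument of the type familiar from the interior monotonicity theory for Allen--Cahn.

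First, by the smoothness of $u$, I differentiate under the integral sign to obtain
\begin{equation*}
\frac{d}{dt}\mu_t^\varepsilon(\phi) = \int_\Omega \phi\left(\varepsilon \nabla u \cdot \nabla \partial_t u + \frac{W'(u)}{\varepsilon}\partial_t u\right) dx + \int_{\partial\Omega}\phi\,\sigma'(u)\,\partial_t u \, d\mathcal{H}^{n-1}.
\end{equation*}
Integrating by parts in the interior term, the resulting boundary contribution pairs with the standalone boundary integral above and cancels by the Robin condition $\varepsilon(\nabla u \cdot \nu) = -\sigma'(u)$. Substituting the PDE into the remainder then gives the weighted identity
\begin{equation*}
\frac{d}{dt}\mu_t^\varepsilon(\phi) = -\int_\Omega \varepsilon\phi(\partial_t u)^2\,dx - \int_\Omega \varepsilon\,\partial_t u\,\nabla\phi\cdot\nabla u\,dx.
\end{equation*}
A Young inequality on the cross-term, with weights chosen so that the dissipation on the right absorbs, then yields
\begin{equation*}
\frac{d}{dt}\mu_t^\varepsilon(\phi) \leq \frac{1}{2}\int_\Omega \frac{\varepsilon|\nabla\phi|^2|\nabla u|^2}{\phi}\,dx.
\end{equation*}

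To close the estimate I would apply the Glaeser-type pointwise inequality $|\nabla\phi|^2 \leq 2\|\phi\|_{C^2}\phi$, valid for non-negative $C^2$ functions, followed by $\int_\Omega \varepsilon|\nabla u|^2\,dx \leq 2\mu_{t,1}^\varepsilon(\Omega)$. Finally, $\mu_{t,1}^\varepsilon(\Omega)$ is controlled by $E_0$ using the energy monotonicity bound (\ref{eqn: uniform t and i energy bounds}), together with assumption \hyperref[asmp: on sigma]{(A2)} and the parabolic maximum principle (which preserves $|u| \leq 1$ from the initial bound in \hyperref[asmp: uniform bounds]{(A4)}) to control the boundary energy $\mu_{t,2}^\varepsilon(\partial\Omega)$; up to an absolute constant that can be absorbed into $\|\phi\|_{C^2}$, this gives the required bound.

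The step I expect to be most technical is the application of Glaeser's inequality on $\overline{\Omega}$ rather than on $\mathbb{R}^n$: the classical quadratic-Taylor proof requires moving freely in the direction of $-\nabla\phi(x)$, which near $\partial\Omega$ might leave the domain. This can be resolved either by a $C^2$ Whitney extension of $\phi$ to a neighbourhood of $\overline{\Omega}$ with controlled $C^2$ norm, or by restricting the Taylor step to inward-pointing directions (feasible using the interior tubular neighbourhood $N_\kappa$ introduced in Subsection 1.1); either route yields the inequality up to a constant depending only on $\Omega$, which is routine to absorb.
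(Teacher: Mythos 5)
Your argument is correct and follows essentially the same route as the paper's proof: differentiate under the integral, integrate by parts so that the Robin condition cancels the boundary contribution, complete the square (Young) on the cross term, and close via Cauchy--Schwarz, the Glaeser-type bound $|\nabla\phi|^2 \le 2\|\phi\|_{C^2(\overline{\Omega})}\phi$, and the energy bound. The only discrepancy is quantitative: to land on the stated constant $E_0\|\phi\|_{C^2(\overline{\Omega})}$ (rather than a multiple of it, which cannot literally be ``absorbed into $\|\phi\|_{C^2(\overline{\Omega})}$'') you should use the sharp completion of the square, which yields $\int_\Omega \varepsilon(\nabla\phi\cdot\nabla u)^2/(4\phi)$, and then estimate $\varepsilon|\nabla u|^2/4 \le \tfrac12\left(\varepsilon|\nabla u|^2/2 + W(u)/\varepsilon\right)$ so that the full energy density of $\mu_{t,1}^\varepsilon$ appears with coefficient $|\nabla\phi|^2/(2\phi)\le \|\phi\|_{C^2(\overline{\Omega})}$, exactly as the paper does.
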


\begin{proof}
    Integrating by parts and using the boundary condition we have
    \begin{align*}
        \frac{d}{dt}\int_{\overline{\Omega}} \phi \, d\mu_t^\varepsilon &= \int_\Omega \phi \left(\varepsilon (\nabla u \cdot \nabla u_t) + \frac{W'(u)}{\varepsilon}u_t\right) + \int_{\partial\Omega} \phi\sigma'(u)u_t\,d\mathcal H^{n-1}\\
        &= -\int_\Omega \varepsilon (\nabla \phi \cdot \nabla u) u_t - \int_\Omega \varepsilon \phi (u_t)^2\\
        &= \int_\Omega \frac{\varepsilon (\nabla \phi \cdot \nabla u)^2}{4\phi} - \int_\Omega \varepsilon\phi \left(u_t + \frac{\nabla \phi \cdot \nabla u}{2\phi}\right)^2\\
        &\leq \int_\Omega \frac{\varepsilon (\nabla \phi \cdot \nabla u)^2}{4\phi}\\
        &\leq \int_\Omega \frac{|\nabla \phi|^2}{2\phi}\left(\frac{\varepsilon|\nabla u|^2}{2} + \frac{W(u)}{\varepsilon}\right)
        \leq E_0 ||\phi||_{C^2(\overline{\Omega})},
    \end{align*}
    where in the third equality we complete the square and in the final inequality use the estimate $\sup_{\{\phi > 0\}} \frac{|\nabla \phi|^2}{\phi} 
    \leq 2 \sup ||\nabla^2 \phi||$ (which follows from Cauchy's mean value theorem). The desired monotonicity follows by integrating in $t$.
\end{proof}

We now establish Theorem \ref{thm: 1} part 1:
\begin{proof}[Proof of Theorem \ref{thm: 1} part 1]
With Lemma \ref{lem: semidecreasing property} established, this now follows in an identical manner to \cite[Proposition 5.2]{MT15}; we repeat the argument here for completeness.

\bigskip

Let $B_0 \subset [0,\infty)$ be a countable dense subset. By the energy bound in \hyperref[asmp: uniform bounds]{(A4)} and the compactness of Radon measures there exists a subsequence (denoted by the same index) and Radon measures, $\{\mu_t\}_{t \in B_0}$, supported in $\overline{\Omega}$ such that, for $t \in B_0$, we have $\mu_t^{\varepsilon_i} \rightharpoonup \mu_t$.

\bigskip

Let $\{\phi_k\}_{k \geq 1} \subset C^2(\overline{\Omega}; \mathbb{R}_+)$ be a countable dense subset of $C(\overline{\Omega}; \mathbb{R}_+)$. By Lemma \ref{lem: semidecreasing property}, for each $k \geq 1$ there is an at most countable subset, $B_k \subset [0, \infty)$, such that the function $t \in B_0 \mapsto \mu_t(\phi_k)$ can be extended continuously to $[0, \infty) \setminus B_k$. Moreover, by setting $B = \cup_{k \geq 1} B_k$ (which is countable) we ensure that, for each $k \geq 1$, the function $t \mapsto \mu_t(\phi_k)$ defined on $B_0$ has the continuous extension to $[0, \infty) \setminus B$ given by
\begin{equation}\label{eqn: continuous extension}
    \mu_s(\phi_k) = \lim_{\substack{t \nearrow s \\ t \in B_0}} \mu_t(\phi_k) = \lim_{\substack{t \searrow s \\ t \in B_0}} \mu_t(\phi_k)
\end{equation}
for each $s \in [0, \infty) \setminus B$. Let $s \in [0, \infty) \setminus B$ and $\{\varepsilon_{i_j}\} \subset \{\varepsilon_i\}$ be any subsequence such that
\begin{equation}\label{eqn: convergence for good times}
    \mu_s^{j} \rightharpoonup \tilde{\mu}_s
\end{equation}
for some Radon measure, $\tilde{\mu}_s$. For each $t,\tilde{t} \in B_0$ with $t < s < \tilde{t}$ and $k \geq 1$ we have from Lemma \ref{lem: semidecreasing property} that
\begin{equation*}
    \mu_{\tilde{t}}^j(\phi_k) - E_0||\phi_k||_{C^2(\overline{\Omega})}(\tilde{t} - s) \leq \mu_s^j(\phi_k) \leq \mu_{t}^j(\phi_k) - E_0||\phi_k||_{C^2(\overline{\Omega})}(t - s),
\end{equation*}
and thus by (\ref{eqn: continuous extension}) and (\ref{eqn: convergence for good times}) we see that
\begin{equation*}
     \mu_{\tilde{t}}(\phi_k) - E_0||\phi_k||_{C^2(\overline{\Omega})}(\tilde{t} - s) \leq \tilde{\mu}_s(\phi_k) \leq \mu_{t}(\phi_k) - E_0||\phi_k||_{C^2(\overline{\Omega})}(t - s).
\end{equation*}
Upon sending $t \nearrow s$ and $\tilde{t} \searrow s$ we have $\mu_s(\phi_k) = \tilde{\mu}_s(\phi_k)$ and so $\mu_s^{\varepsilon_i}(\phi_k) \rightarrow \mu_s(\phi_k)$ for all $s \in [0, \infty) \setminus B$; since the $\{\phi_k\}_{k \geq 1}$ are dense in $C(\overline{\Omega}; \mathbb{R}_+)$ we conclude that $\mu_s \rightharpoonup \mu_s$ for all $s \in [0, \infty) \setminus B$. Finally, as $B$ is countable we use the energy bound in \hyperref[asmp: uniform bounds]{(A4)}, the compactness of Radon measures, and the diagonal argument to choose a further subsequence (again denoted by the same index) such that, for each $t \geq 0$, we have $\mu_t^{\varepsilon_i} \rightarrow \mu_t$ for some collection of Radon measures, $\{\mu_t\}_{t \geq 0}$, as desired.
\end{proof}

We now establish Theorem \ref{thm: 1} part 2:
\begin{proof}[Proof of Theorem \ref{thm: 1} part 2] We denote 
\begin{equation}\label{eqn: definition of Phi}
    \Phi (t) = \int_{-1}^{t} \sqrt{2W (s)} \, ds
\end{equation}
so that $\Phi(1) = c_0$, and set $w_{i} ( \, \cdot \, , t) = \Phi ( u_{i} ( \, \cdot \, , t))$ for each $t \in [0, T)$. By (\ref{eqn: time derivative of allen cahn energy}) and the energy bound in \hyperref[asmp: uniform bounds]{(A4)} we have that
\begin{eqnarray*}
    \int_{0}^{T} \int_{\Omega} |\partial_{t} w_{i}| + |\nabla w_{i}| &=& \int_{0}^{T} \int_{\Omega} |\sqrt{2W (u_{i})}| |\partial_{t} u_{i}| + |\sqrt{2W (u_{i})}| |\nabla u_{i}| \\
    &\leq& \frac{1}{2} \int_{0}^{T} \int_{\Omega} \varepsilon_{i} (\partial_{t} u_{i})^{2} + \int_{0}^{T} \int_{\Omega} \frac{\varepsilon_{i} |\nabla u_{i}|^{2}}{2} + \frac{W (u_{i})}{\varepsilon_{i}} \\
    &\leq& \frac{1}{2} (E_{0} - E_{\varepsilon_{i}} (u_{i} ( \, \cdot \, , T))) + \int_{0}^{T} E_{\varepsilon_{i}} (u_{i} ( \, \cdot \, , t)) \, dt - \int_{0}^{T} \int_{\partial \Omega} \sigma (u_{i} ( \, \cdot \, , t)) \, d \mathcal{H}^{n - 1} \, dt.
\end{eqnarray*}
Thus we deduce that
\begin{equation*}
    \sup_{i} || w_{i} ||_{BV (\Omega \times [0, T))} < + \infty,
\end{equation*}
so that, after taking a subsequence and reindexing, there exists $w \in BV (\Omega \times [0, T))$ with $w_{i} \rightarrow w$ in $L^{1} (\Omega \times [0, T))$ and 
\begin{equation*}
    \int_{0}^{T} \int_{\Omega} |\partial_{t} w| + |\nabla w| \, dx \, dt \leq \liminf_{i \rightarrow \infty} \int_{0}^{T} \int_{\Omega} |\partial_{t} w_{i}| + |\nabla w_{i}|.
\end{equation*}
We further deduce that, after potentially taking a further subsequence and reindexing, for $\mathcal{L}^1$ a.e.~$t \in [0, T)$ we have $w_{i} (\, \cdot \, , t) \rightarrow w ( \, \cdot \, , t)$ in $L^{1} (\Omega)$.
Moreover, for such $t \in [0, T)$, $X \in C^{1}_{c} (\Omega; \mathbb{R}^{n})$, and any $\tau > 0$, we then choose large enough $i$, such that
\begin{eqnarray*}
    \int_{\Omega} w ( \, \cdot \, , t) \, \mathrm{div} X \, dx &\leq& \int_{\Omega} w_{i} ( \, \cdot \, , t) \, \mathrm{div} X \, dx + \tau \\
    &\leq& ||X||_{L^{\infty}} \int_{\Omega} \frac{\varepsilon_{i} |\nabla u_{i} ( \, \cdot \, , t)|^{2}}{2} + \frac{W (u_{i} ( \, \cdot \, , t))}{\varepsilon_{i}} \, dx + \tau.
\end{eqnarray*}
This implies that for $\mathcal{L}^1$ a.e.~$t \in [0, T)$ we have $w ( \, \cdot \, , t) \in BV (\Omega)$ with 
\begin{equation*}
    \int_{\Omega} |D w ( \, \cdot \, , t)| \leq \liminf_{i \rightarrow \infty} \int_{\Omega} \frac{\varepsilon_{i} |\nabla u_{i} ( \, \cdot \, , t)|^{2}}{2} + \frac{W (u_{i} ( \, \cdot \, , t))}{\varepsilon_{i}} \, dx.
\end{equation*}
As $\Phi$ is increasing and continuous it has a continuous inverse, $\Phi^{-1}$, and thus the functions $u_i(\,\cdot\,,t) = \Phi^{-1}(w_i(\,\cdot\,,t))$ converge, for $\mathcal{L}^1$ a.e.~$t \geq 0$, pointwise $\mathcal{H}^n$ a.e.~to $u(\,\cdot\,,t) = \Phi^{-1}(w(\,\cdot\,,t)) \in BV(\Omega)$. Noting that $\int_\Omega W(u_i) \leq E_0 \varepsilon_{i}$ and applying Fatou's lemma we conclude that
$$0 \leq \int_\Omega W(u) \leq \liminf_{j \rightarrow \infty} \int_\Omega W(u_i) = 0.$$
Thus, by the assumptions on $W$, we have $u(\,\cdot\,,t) = \pm 1$ $\mathcal{L}^{n}$ a.e.~on $\Omega$, as desired.
\end{proof}

In order to control the energy on the boundary, we first establish the following expression for the first variation of the diffuse varifolds:

\begin{prop}\label{prop: first variation of varifolds}
Suppose for some $\varepsilon \in (0,1)$ that $u \in C^\infty(\overline{\Omega} \times [0,\infty))$ is a solution to (\ref{eqn: parabolic Allen-Cahn with contact energy introduction}). Then, for $g \in C^{1}_{c} (\mathbb{R}^n; \mathbb{R}^{n})$, we have
\begin{equation}\label{eqn: first variation of parabolic allen cahn varifolds}        \begin{split}
        \delta V_{t}^{\varepsilon} (g) =& \int_{\Omega \cap \{ |\nabla u| \not= 0\}} \left( \nabla g \cdot \frac{\nabla u}{|\nabla u|} \otimes \frac{\nabla u}{|\nabla u|} \right) \, d \xi_{t}^{\varepsilon}\\
        & \quad + \int_{\Omega \cap \{\nabla u = 0\}} (\nabla g \cdot I) \, d \xi_{t}^{\varepsilon} + \int_{\Omega} \varepsilon \partial_{t} u \, (g \cdot \nabla u) \, dx \\ 
        & \quad\quad\quad +\int_{\partial \Omega} \left( \frac{\varepsilon |\nabla u|^{2}}{2} + \frac{W (u)}{\varepsilon} - \frac{\sigma' (u)^{2}}{\varepsilon} \right) \, (g \cdot \nu) \, d \mathcal{H}^{n - 1}.
    \end{split}
\end{equation}
\end{prop}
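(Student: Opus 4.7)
My plan is to compute $\delta V_{t}^{\varepsilon}(g) = \delta V_{t, 1}^{\varepsilon}(g) + \delta V_{t, 2}^{\varepsilon}(g)$ directly from the varifold definitions in \eqref{eqn: varifold definitions}. From the definition of $V_{t,2}^\varepsilon$, one has immediately $\delta V_{t, 2}^{\varepsilon}(g) = \int_{\partial \Omega} \mathrm{div}_{\partial \Omega}(g) \sigma(u) \, d \mathcal{H}^{n - 1}$, which I will set aside until the final step. The bulk of the work lies in $\delta V_{t, 1}^{\varepsilon}(g)$: I expect to isolate the two discrepancy-weighted integrals in the statement, then invoke the Allen--Cahn equation via its stress tensor to generate the time-derivative term on $\Omega$ together with a boundary piece that combines with $\delta V_{t, 2}^{\varepsilon}$ to produce the stated $(g \cdot \nu)$ boundary integrand.

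For $\delta V_{t, 1}^{\varepsilon}(g)$ I would first work on $\{\nabla u \neq 0\}$ and apply the algebraic identity
\begin{equation*}
    \nabla g \cdot \left( I - \frac{\nabla u}{|\nabla u|} \otimes \frac{\nabla u}{|\nabla u|} \right) \left( \frac{\varepsilon |\nabla u|^{2}}{2} + \frac{W(u)}{\varepsilon} \right) = \mathrm{div}(g) \left( \frac{\varepsilon |\nabla u|^{2}}{2} + \frac{W(u)}{\varepsilon} \right) - \varepsilon \, \nabla g \cdot (\nabla u \otimes \nabla u) + \xi_{t}^{\varepsilon} \, \nabla g \cdot \frac{\nabla u}{|\nabla u|} \otimes \frac{\nabla u}{|\nabla u|},
\end{equation*}
which is an immediate consequence of $\frac{\varepsilon|\nabla u|^2}{2} + \frac{W(u)}{\varepsilon} = \varepsilon|\nabla u|^2 - \xi_{t}^{\varepsilon}$ and $|\nabla u|^2 \, (\nabla u/|\nabla u|)\otimes(\nabla u/|\nabla u|) = \nabla u \otimes \nabla u$. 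Extending the first two terms from $\{\nabla u \neq 0\}$ to all of $\Omega$ introduces a correction $-\int_{\{\nabla u = 0\}} \mathrm{div}(g) \frac{W(u)}{\varepsilon} dx = \int_{\{\nabla u = 0\}} \nabla g \cdot I \, d\xi_{t}^{\varepsilon}$ (using $\xi_{t}^{\varepsilon} = -W(u)/\varepsilon$ on $\{\nabla u = 0\}$), which together with the existing $\xi_{t}^{\varepsilon}$-integral on $\{\nabla u \neq 0\}$ recovers the first two terms of the claimed formula.

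It then remains to handle the interior integral $\int_{\Omega} T_{ij}^{\varepsilon} \partial_{j} g_{i} \, dx$ of the Allen--Cahn stress tensor $T_{ij}^{\varepsilon} = \left( \frac{\varepsilon |\nabla u|^{2}}{2} + \frac{W(u)}{\varepsilon} \right) \delta_{ij} - \varepsilon \partial_{i} u \, \partial_{j} u$. A short calculation using the Allen--Cahn equation in \eqref{eqn: parabolic Allen-Cahn with contact energy introduction} gives $\partial_{j} T_{ij}^{\varepsilon} = -\varepsilon \partial_{t} u \, \partial_{i} u$, so integration by parts over $\Omega$ produces the desired $\int_{\Omega} \varepsilon \partial_{t} u \, (g \cdot \nabla u) \, dx$ together with a boundary residual $\int_{\partial \Omega} T_{ij}^{\varepsilon} g_{i} \nu_{j} \, d \mathcal{H}^{n-1}$. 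Decomposing $\nabla u = \nabla^{\top} u + (\nabla u \cdot \nu) \nu$ on $\partial \Omega$ and substituting the Robin condition $\varepsilon (\nabla u \cdot \nu) = -\sigma'(u)$ yields
\begin{equation*}
    T_{ij}^{\varepsilon} g_{i} \nu_{j} = \left( \frac{\varepsilon |\nabla u|^{2}}{2} + \frac{W(u)}{\varepsilon} - \frac{\sigma'(u)^{2}}{\varepsilon} \right) (g \cdot \nu) + \sigma'(u) \, g \cdot \nabla^{\top} u,
\end{equation*}
and recognising $\sigma'(u) \nabla^{\top} u = \nabla^{\top} \sigma(u)$, the leftover tangential contribution combines with $\delta V_{t, 2}^{\varepsilon}(g)$ via the product rule $\mathrm{div}_{\partial\Omega}(\sigma(u) g) = \sigma(u) \mathrm{div}_{\partial\Omega}(g) + g \cdot \nabla^{\top}\sigma(u)$ and the divergence theorem on the closed hypersurface $\partial\Omega$ to produce the stated $(g\cdot \nu)$ integrand.

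The main obstacle I anticipate is the algebraic bookkeeping: carefully tracking how extending integrals across the singular set $\{\nabla u = 0\}$ generates precisely the second discrepancy term in the statement, and then combining the residual tangential boundary contributions from $V_{t, 1}^{\varepsilon}$ and $V_{t, 2}^{\varepsilon}$ so that only the $(g \cdot \nu)$ integrand with weight $\frac{\varepsilon |\nabla u|^{2}}{2} + \frac{W(u)}{\varepsilon} - \frac{\sigma'(u)^{2}}{\varepsilon}$ remains. The core interior identity is the classical Pohozaev-type divergence of the Allen--Cahn stress tensor, here adapted to the nonlinear Robin boundary condition via the substitution $\varepsilon(\nabla u \cdot \nu) = -\sigma'(u)$.
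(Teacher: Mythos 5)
Your proposal is correct and follows essentially the same route as the paper: the paper imports the interior stress--tensor (Pohozaev-type) computation for $\delta V_{t,1}^{\varepsilon}$ from \cite[Lemma 4.1]{KT18} rather than writing it out, but that computation is exactly your identity $\partial_j T^{\varepsilon}_{ij} = -\varepsilon\,\partial_t u\,\partial_i u$ together with the bookkeeping across $\{\nabla u = 0\}$, and the final step combining the residual tangential boundary term $\sigma'(u)(g\cdot\nabla^{\top}u)$ with $\delta V_{t,2}^{\varepsilon}(g)$ and the Robin condition is identical. The intermediate expressions you obtain match the paper's line for line.
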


\begin{proof}
    By an identical computation to that of \cite[Lemma 4.1]{KT18}, for $g \in C^{1}_{c} (\mathbb{R}^n; \mathbb{R}^{n})$ we have
    \begin{equation*}
    \begin{split}
        \delta V_{t, 1}^{\varepsilon} (g) = & \int_{\Omega \cap \{ |\nabla u| \not= 0\}} \nabla g \cdot \frac{\nabla u}{|\nabla u|} \otimes \frac{\nabla u}{|\nabla u|} \left( \frac{\varepsilon |\nabla u|^{2}}{2} - \frac{W (u)}{\varepsilon} \right) \, dx \\
        & \quad - \int_{\Omega \cap \{\nabla u = 0\}} \nabla g \cdot I \, \frac{W (u)}{\varepsilon} \, dx + \int_{\Omega} \varepsilon \partial_{t} u \, (g\cdot \nabla u) \, dx \\ 
        &\quad\quad\quad + \int_{\partial \Omega} \left( \frac{\varepsilon |\nabla u|^{2}}{2} + \frac{W (u)}{\varepsilon} \right) \, (g \cdot \nu) \, d \mathcal{H}^{n - 1} + \int_{\partial \Omega} \sigma' (u) \, (g\cdot \nabla u) \, d \mathcal{H}^{n - 1}.
    \end{split}
\end{equation*}
    For $\delta V_{t, 2}^{\varepsilon} (g)$ we compute that for $g \in C^{1}_{c} (\mathbb{R}^n; \mathbb{R}^{n})$ by the divergence theorem we have
    \begin{equation*}
        \delta V_{t, 2}^{\varepsilon} (g) = \int_{\partial \Omega} \sigma (u) \,\mathrm{div}_{\partial\Omega} (g)  \, d \mathcal{H}^{n - 1}
        = - \int_{\partial \Omega} \sigma' (u) \, (g \cdot \nabla^\top u) \, d \mathcal{H}^{n - 1}.
    \end{equation*}
Summing the above two expressions and using the boundary condition in (\ref{eqn: parabolic Allen-Cahn with contact energy}) we see that
\begin{equation*}
    \begin{split}
        \delta V_{t}^{\varepsilon} (g)
        =& \int_{\Omega \cap \{ |\nabla u| \not= 0\}} \nabla g \cdot \frac{\nabla u}{|\nabla u|} \otimes \frac{\nabla u}{|\nabla u|} \left( \frac{\varepsilon |\nabla u|^{2}}{2} - \frac{W (u)}{\varepsilon} \right) \, dx \\
        & \quad- \int_{\Omega \cap \{\nabla u = 0\}} \nabla g \cdot I \, \frac{W (u)}{\varepsilon} \, dx + \int_{\Omega} \varepsilon \partial_{t} u \, (g \cdot \nabla u) \, dx \\ 
        &\quad\quad\quad + \int_{\partial \Omega} \left( \frac{\varepsilon |\nabla u|^{2}}{2} + \frac{W (u)}{\varepsilon} - \frac{\sigma' (u)^{2}}{\varepsilon} \right) \, (g \cdot \nu) \, d \mathcal{H}^{n - 1},
    \end{split}
\end{equation*}
which is equivalent to (\ref{eqn: first variation of parabolic allen cahn varifolds}), as desired.
\end{proof}

We also establish the following boundary energy control: 

\begin{prop}\label{prop: boundary energy bound}
    Suppose for some $\varepsilon \in (0,1)$ that $u \in C^\infty(\overline{\Omega} \times [0,\infty))$ is a solution to (\ref{eqn: parabolic Allen-Cahn with contact energy introduction}) with $E_\varepsilon(u(\, \cdot \,, 0)) \leq E_0$ and $||u(\,\cdot\,,0)||_{L^\infty(\overline{\Omega})} \leq 1$. Then, there exists a constant $C > 0$ depending only on $\Omega, E_{0}, \sigma$ and $c_{1}$ such that
    \begin{equation*}
        \int_{0}^{T} \int_{\partial \Omega} \left( \frac{\varepsilon |\nabla u (\, \cdot \, , t)|^{2}}{2} + \frac{W (u ( \, \cdot \, , t))}{\varepsilon} \right) \, d \mathcal{H}^{n - 1} \, dt \leq C (1 + T).
    \end{equation*}
    \end{prop}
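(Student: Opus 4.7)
The plan is to test the first variation identity (\ref{eqn: first variation of parabolic allen cahn varifolds}) of Proposition \ref{prop: first variation of varifolds} against a carefully chosen vector field and integrate in time. Specifically, choose $g \in C^1_c(\mathbb{R}^n; \mathbb{R}^n)$ extending the outward unit normal $\nu$ from $\partial\Omega$ (possible since $\partial\Omega$ is smooth and compact), so that $\|g\|_{C^1}$ depends only on $\Omega$, $g \cdot \nu \equiv 1$ on $\partial\Omega$ and, by the Robin condition, $g \cdot \nabla u = \nabla u \cdot \nu = -\sigma'(u)/\varepsilon$ on $\partial\Omega$. With this choice the boundary integrand in (\ref{eqn: first variation of parabolic allen cahn varifolds}) is $\frac{\varepsilon|\nabla u|^2}{2} + \frac{W(u)}{\varepsilon} - \frac{\sigma'(u)^2}{\varepsilon}$, which rewrites, via the orthogonal decomposition $|\nabla u|^2 = |\nabla^\top u|^2 + (\nabla u \cdot \nu)^2$ and a second application of the Robin condition, as $\frac{\varepsilon|\nabla^\top u|^2}{2} + \frac{W(u)}{\varepsilon} - \frac{\sigma'(u)^2}{2\varepsilon}$. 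By \hyperref[asmp: on sigma]{(A2)}, $\sigma'(u)^2 \leq 2c_1^2 W(u)$ with $c_1^2 \in [0,1)$, so the integrand is bounded below by $\frac{\varepsilon|\nabla^\top u|^2}{2} + (1-c_1^2)\frac{W(u)}{\varepsilon} \geq 0$, and conversely $\frac{\varepsilon|\nabla u|^2}{2} + \frac{W(u)}{\varepsilon} \leq \frac{1+c_1^2}{1-c_1^2}\bigl(\frac{\varepsilon|\nabla^\top u|^2}{2} + (1-c_1^2)\frac{W(u)}{\varepsilon}\bigr)$. Hence bounding the rearranged boundary integrand by $C(1+T)$ after integration in $t$ will immediately yield the claimed estimate.

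It remains to bound the other terms in (\ref{eqn: first variation of parabolic allen cahn varifolds}) integrated over $[0,T]$. The left-hand side $\delta V_t^\varepsilon(g) = \delta V_{t,1}^\varepsilon(g) + \delta V_{t,2}^\varepsilon(g)$ satisfies
\[
|\delta V_t^\varepsilon(g)| \leq C\|g\|_{C^1}\Bigl(\mu_{t,1}^\varepsilon(\overline{\Omega}) + \int_{\partial\Omega}|\sigma(u)|\, d\mathcal{H}^{n-1}\Bigr),
\]
where the first factor is $\leq E_0$ by (\ref{eqn: uniform t and i energy bounds}) and the second is bounded by $\sup_{|s| \leq 1}|\sigma(s)|\cdot \mathcal{H}^{n-1}(\partial\Omega)$ using the maximum principle bound $\|u(\,\cdot\,,t)\|_{L^\infty(\overline{\Omega})} \leq 1$, which holds since $\sigma'(\pm 1) = 0$ by \hyperref[asmp: on W]{(A1)} and \hyperref[asmp: on sigma]{(A2)} makes $\pm 1$ stationary for both the interior equation and the boundary condition. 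The interior discrepancy integral is bounded analogously, since $|d\xi_t^\varepsilon| \leq d\mu_{t,1}^\varepsilon$ and the matrix contractions are $\leq n\|\nabla g\|_\infty$ pointwise. For the time-derivative term, Cauchy--Schwarz together with (\ref{eqn: uniform t and i energy bounds}) gives
\[
\left|\int_\Omega \varepsilon\,\partial_t u\,(g \cdot \nabla u)\, dx\right| \leq \|g\|_\infty\,(2E_0)^{1/2}\Bigl(\int_\Omega \varepsilon(\partial_t u)^2\, dx\Bigr)^{1/2},
\]
and a further Cauchy--Schwarz in time combined with (\ref{eqn: time derivative of allen cahn energy}), which together with the $L^\infty$-induced lower bound on $E_\varepsilon(u(\,\cdot\,,T))$ yields $\int_0^T\int_\Omega \varepsilon(\partial_t u)^2\, dx\, dt \leq C$, bounds this contribution by $C T^{1/2} \leq C(1+T)$.

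The main point of care is the algebraic rearrangement in the first paragraph: the Robin condition forces $(\nabla u \cdot \nu)^2 = \sigma'(u)^2/\varepsilon^2$, so half of the apparently `bad' $-\sigma'(u)^2/\varepsilon$ term cancels against the corresponding normal contribution to $\varepsilon|\nabla u|^2/2$, and the remaining $-\sigma'(u)^2/(2\varepsilon)$ is absorbed into $W(u)/\varepsilon$ via \hyperref[asmp: on sigma]{(A2)} at the cost of the strictly positive factor $1 - c_1^2$. The strict inequality $c_1 < 1$ from \hyperref[asmp: on sigma]{(A2)} is essential, and consistent with the expected `perfect wetting' breakdown as $c_1 \nearrow 1$ described in the remark following \hyperref[asmp: on sigma]{(A2)}.
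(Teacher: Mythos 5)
Your proof is correct and follows essentially the same route as the paper: test the first variation identity of Proposition \ref{prop: first variation of varifolds} against an extension of $\nu$, bound the first-variation, discrepancy, and time-derivative terms via the energy bound (\ref{eqn: uniform t and i energy bounds}) and (\ref{eqn: time derivative of allen cahn energy}), and absorb the $\sigma'(u)^2/\varepsilon$ term using \hyperref[asmp: on sigma]{(A2)} with $c_1<1$. Your exact cancellation of half of $\sigma'(u)^2/\varepsilon$ against the normal component of $\varepsilon|\nabla u|^2/2$ via the Robin condition is a slightly cleaner way of organising that absorption step, but the argument is the same as the paper's.
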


\begin{proof}
    By the computation in (\ref{eqn: time derivative of allen cahn energy}), the assumptions that both $E_\varepsilon(u(\, \cdot \,, 0)) \leq E_0$ and $||u(\,\cdot\,,0)||_{L^\infty(\overline{\Omega})} \leq 1$ we have
    \begin{equation*}
        \int_{\Omega} \left( \frac{\varepsilon |\nabla u (\, \cdot \,, t)|^{2}}{2} + \frac{W (u ( \, \cdot \, , t))}{\varepsilon} \right) \leq E_{0} - \int_{\partial \Omega} \sigma (u) \, d \mathcal{H}^{n - 1} \leq E_{0} -  \left(\min_{s \in [-1,1]} \sigma(s)\right) \mathcal{H}^{n - 1} (\partial \Omega),
    \end{equation*}
    for each $t \in [0, T)$. Using the gradient of the signed distance function to $\partial \Omega$ and cutting off in an appropriately small tubular neighbourhood of $\partial \Omega$, we consider a function, $g \in C^{2}_{c} (\mathbb{R}^{n}; \mathbb{R}^{n})$, such that
    \begin{equation*}
        \begin{cases}
            || g ||_{L^{\infty}} = 1, \\
            || \nabla g ||_{L^{\infty}} \leq C (\Omega), \\
            g = \nu & \text{on} \, \partial \Omega.
        \end{cases}
    \end{equation*}
    By definition of the first variation we have $|\delta V^{\varepsilon}_{t} (g)| \leq C$, where the constant depends only on $\Omega$, $E_{0}$ and $\sigma$, and thus by (\ref{eqn: first variation of parabolic allen cahn varifolds}) we see that 
    \begin{equation*}
            \int_{\partial \Omega} \left( \frac{\varepsilon |\nabla u (\,\cdot\,,t)|^{2}}{2} + \frac{W (u ( \, \cdot \, , t))}{\varepsilon} \right) \leq - \int_{\Omega} \varepsilon (\partial_{t} u) \, (g \cdot \nabla u) \, dx + \int_{\partial \Omega} \frac{\sigma' (u)^{2}}{\varepsilon} \, d \mathcal{H}^{n - 1} + C.
    \end{equation*}
    We have 
    \begin{equation*}
        \left| \int_{\Omega} \varepsilon (\partial_{t} u) \, (g \cdot \nabla u) \, dx \right| \leq \frac{1}{2} \int_{\Omega} \varepsilon (\partial_{t} u)^{2} \, d x + \int_{\Omega} \frac{\varepsilon |\nabla u|^{2}}{2} \, dx \leq \frac{1}{2} \int_{\Omega} \varepsilon (\partial_{t} u)^{2} \, dx + C,
    \end{equation*}
    and by \hyperref[asmp: on sigma]{(A2)} there is $c_1 \in [0, 1)$ such that
    \begin{equation*}
        \int_{\partial \Omega} \frac{\sigma' (u)^{2}}{\varepsilon} \, d \mathcal{H}^{n - 1} \leq c_1^2 \int_{\partial \Omega} \frac{W(u)}{\varepsilon} \, d \mathcal{H}^{n - 1} \leq c_1 \int_{\partial \Omega} \frac{\varepsilon |\nabla u|^{2}}{2} + \frac{W(u)}{\varepsilon} \, d \mathcal{H}^{n - 1}.
    \end{equation*}
    Combining the above, we see that by (\ref{eqn: time derivative of allen cahn energy}) we have
    \begin{equation*}
        (1 - c_1^2) \int_{\partial \Omega} \left( \frac{\varepsilon |\nabla u (\,\cdot\,, t)|^{2}}{2} + \frac{W (u ( \, \cdot \, , t))}{\varepsilon} \right) \leq \frac{1}{2} \int_{\Omega} \varepsilon (\partial_{t} u)^{2} \, dx + C = - \frac{1}{2} \frac{d}{dt} E_{\varepsilon} (u) + C.
    \end{equation*}
    Integrating this expression over $[0, T)$ we have 
    \begin{equation*}
        \int_{0}^{T} \int_{\partial \Omega} \left( \frac{\varepsilon |\nabla u (\, \cdot \, , t)|^{2}}{2} + \frac{W (u ( \, \cdot \, , t))}{\varepsilon} \right) \, d\mathcal{H}^{n - 1} \, dt \leq \frac{1}{2 (1 - c_{1}^2)} (E_{\varepsilon} (u ( \, \cdot \, , 0)) - E_{\varepsilon} (u ( \, \cdot \, , T))) + C T, 
    \end{equation*}
    where $C$ now depends only on $\Omega, E_{0}, \sigma$ and $c_{1}$.
    Noting that
    \begin{equation*}
        E_{\varepsilon} (u( \, \cdot \,, T)) \geq \int_{\partial \Omega} \sigma (u (\, \cdot \, , t)) \, d \mathcal{H}^{n - 1} \geq \left(\min_{s \in [-1,1]} \sigma(s)\right)  \mathcal{H}^{n - 1} (\partial \Omega),
    \end{equation*}
    we have
    \begin{equation*}
        \int_{0}^{T} \int_{\partial \Omega} \left( \frac{\varepsilon |\nabla u (\, \cdot \, , t)|^{2}}{2} + \frac{W (u ( \, \cdot \, , t))}{\varepsilon} \right) \, d\mathcal{H}^{n - 1} \, dt \leq C (1 + T), 
    \end{equation*} 
    for a new constant, $C$, also depending only on $\Omega, E_{0}, \sigma$ and $c_{1}$.
\end{proof}

We can now complete the proof of Theorem \ref{thm: 1} part 3: 

\begin{proof}[Proof of Theorem \ref{thm: 1} part 3]
    By Fatou's Lemma and Proposition \ref{prop: boundary energy bound} we have that \begin{eqnarray*}
    && \int_{0}^{T} \liminf_{i \rightarrow \infty} \int_{\partial \Omega} \left( \frac{\varepsilon_{i} |\nabla u_{i} (\, \cdot \, , t)|^{2}}{2} + \frac{W (u_{i} ( \, \cdot \, , t))}{\varepsilon_{i}} \right) \, d\mathcal{H}^{n - 1} \, dt \\
    &&\quad\quad\quad\quad\leq \liminf_{i \rightarrow \infty} \int_{0}^{T} \int_{\partial \Omega} \left( \frac{\varepsilon_{i} |\nabla u_{i} (\, \cdot \, , t)|^{2}}{2} + \frac{W (u_{i} ( \, \cdot \, , t))}{\varepsilon_{i}} \right) \, d\mathcal{H}^{n - 1} \, dt \leq C (1 + T).
    \end{eqnarray*}
    Thus, for $\mathcal{L}^1$ a.e.~$t \in [0, T)$ we have a subsequence, $\{\varepsilon_{i_{j}} \} \subset \{ \varepsilon_{i} \}$, dependent on $t \in [0,T)$, such that 
    \begin{equation*}
        \sup_{j} \, \int_{\partial \Omega} \left( \frac{\varepsilon_{i_j} |\nabla u_j (\, \cdot \, , t)|^{2}}{2} + \frac{W (u_j ( \, \cdot \, , t))}{\varepsilon_{i_{j}}} \right) \, d\mathcal{H}^{n - 1} < + \infty.
    \end{equation*}
    By standard arguments (e.g.~see \cite[page 52]{HT00}), there exists a function $\tilde{u} ( \, \cdot \, , t) \in BV (\partial \Omega)$ such that $\tilde{u} ( \, \cdot \, , t) = \pm 1$ $\mathcal{H}^{n-1}$ a.e., $u_j ( \, \cdot \, , t) \rightarrow \tilde{u} ( \, \cdot \, , t)$ in $L^{1} (\partial \Omega)$, and 
    \begin{equation*}
        \int_{\partial \Omega} |D \tilde{u} ( \, \cdot \, , t)| \leq \liminf_{j \rightarrow \infty} \int_{\partial \Omega} |D u_j ( \, \cdot \, , t)|.
    \end{equation*}
    Moreover, we have that $\{\tilde{u} ( \, \cdot \, , t) = + 1\}$ is a Caccioppoli set in $\partial \Omega$.
\end{proof}

\section{Boundary behaviour with assumptions}\label{sec: assumption behaviour}

In this section, under the boundary non-concentration assumptions on the discrepancy and the interior measures, we establish Theorems \ref{thm: 2} and \ref{thm: 3}, respectively.

\subsection{Limiting behaviour under discrepancy non-concentration}

Unless stated otherwise, we assume that assumptions \hyperref[asmp: on W]{(A1)}-\hyperref[asmp: discrepancy]{(A5)} hold throughout this subsection.

\begin{proof}[Proof of Theorem \ref{thm: 2} part 1]

The energy bound in \hyperref[asmp: uniform bounds]{(A4)} combined with (\ref{eqn: time derivative of allen cahn energy}) and the proof of Theorem \ref{thm: 1} part 3 show that for $\mathcal{L}^1$ a.e.~$t \geq 0$ we have
\begin{equation}\label{eqn: liminf energy control}
        \liminf_{i \rightarrow \infty}\left\{ \left(\int_\Omega \varepsilon_{i}(\partial_t u)^2 \right)^\frac{1}{2} + \int_{\partial\Omega} \frac{\varepsilon_{i}|\nabla u|^2}{2} + \frac{W(u)}{\varepsilon_i} \, d \mathcal{H}^{n - 1} \right\} = c(t) < \infty.
    \end{equation}
We now fix one of the $\mathcal{L}^1$ a.e.~$t \geq 0$ such that (\ref{eqn: liminf energy control}) and \hyperref[asmp: discrepancy]{(A5)} hold, and then choose a subsequence, $\{\varepsilon_{i_j}\} \subset \{\varepsilon_i\}$, so that both $V_{t,1}^{j} \rightharpoonup V_{t,1}$ and $V_{t}^{j} \rightharpoonup V_{t}$; in particular, this ensures that $\delta V_t(g) = \lim_{j \rightarrow \infty}\delta V_t^j(g)$ for each $g \in C^1(\overline{\Omega};\mathbb{R}^n)$. By Proposition \ref{prop: boundary energy bound} we have, potentially taking a further subsequence, that $u_{j} ( \, \cdot \, , t) \rightarrow \tilde{u} (\, \cdot \, , t)$ in $L^{1} (\partial \Omega)$ as in Theorem \ref{thm: 1} part 3.

\bigskip 

For a subsequence as above, we define linear functionals, $L_j$, for $g \in C^{1}_{c} (\mathbb{R}^{n}; \mathbb{R}^{n})$ by setting
\begin{equation*}
    L_{j} (g) = \int_{\Omega} \varepsilon_{i_j} \, \partial_{t} u_{j} \, (g \cdot \nabla u_{j}) \, dx.
\end{equation*}
Noting that we then have
\begin{equation}\label{eqn: bound on L_i}
    |L_{j} (g)| \leq c(t) \left( ||V_{t, 1}^j|| (\text{spt} (g)) \right)^{\frac{1}{2}} ||g||_{L^{\infty}},
\end{equation}
after potentially taking a further subsequence,  there exists a bounded linear functional, $L$, defined on $C_{c} (\mathbb{R}^{n}; \mathbb{R}^{n})$, with $L_{j} \rightharpoonup L$; notice that by (\ref{eqn: first variation of parabolic allen cahn varifolds}), for each $g \in C_{c}^{1} (\mathbb{R}^{n}; \mathbb{R}^{n})$ with $(g \cdot \nu) = 0$ we then have $L(g) = \delta V_t(g)$. By defining 
\begin{equation*}
    || L || (U) = \sup \{ L (g) \colon g \in C_{c} (U; \mathbb{R}^{n}), \, |g| \leq 1 \},
\end{equation*}
on open sets $U \subset \mathbb{R}^{n}$ ($||L||$ will then be a Radon measure by the Riesz representation theorem, e.g.~see \cite[Theorem 1.38]{EG15}), we have by (\ref{eqn: bound on L_i}) that $||L|| \ll ||V_{t, 1}||$; therefore by the Radon--Nikodym theorem there exists a $||V_{t, 1}||$ measurable vector field, $H_t$, such that $L(g) = -\int_{\overline{\Omega}} g \cdot H_t \, d ||V_{t,1}||$.
Moreover, we note that, for $g \in C^{1}_{c} (\mathbb{R}^{n}; \mathbb{R}^{n})$, we have by similar reasoning leading to (\ref{eqn: bound on L_i}) that
\begin{equation}\label{eqn: L2 bounds on mean curvature}
    \int_{\overline{\Omega}} g \cdot H_t \, d ||V_{t,1}|| = - \lim_{j \rightarrow \infty} L_{j} (g) \leq c (t) \limsup_{j \rightarrow \infty} \left( \int_{\overline{\Omega}} |g|^{2} \, d \| V_{t, 1}^{j} \| \right)^{\frac{1}{2}} = c (t) \left( \int_{\overline{\Omega}} |g|^{2} \, d \| V_{t, 1} \| \right)^{\frac{1}{2}},
\end{equation}
which implies that $H_{t} \in L^{2}_{\| V_{t, 1} \|} (\overline{\Omega}; \mathbb{R}^{n})$ by \cite[Lemma A.3]{L17}. Recalling (\ref{eqn: first variation of parabolic allen cahn varifolds}), assumption \hyperref[asmp: discrepancy]{(A5)}, and the reasoning above, for each $g \in C_{c}^{1} (\mathbb{R}^{n}; \mathbb{R}^{n})$ with $(g \cdot \nu) = 0$ we have 
\begin{equation*}
    \delta V_{t, 1} (g) + \sigma(1) \int_{\{ \tilde{u} ( \, \cdot \, , t) = + 1\}} \mathrm{div}_{\partial \Omega} g \, d \mathcal{H}^{n - 1} = - \int_{\overline{\Omega}} g \cdot H_t \, d ||V_{t,1}||,
\end{equation*}
as desired.\end{proof}

\begin{proof}[Proof of Theorem \ref{thm: 2} part 2]

Fix one of the $\mathcal{L}^1$ a.e.~$t \geq 0$ such that (\ref{eqn: liminf energy control}) and \hyperref[asmp: discrepancy]{(A5)} hold, and then choose a subsequence, $\{\varepsilon_{i_j}\} \subset \{\varepsilon_i\}$, so that $V_{t}^{j} \rightharpoonup V_{t}$; in particular, this ensures that $\delta V_t(g) = \lim_{j \rightarrow \infty}\delta V_t^j(g)$ for each $g \in C^1(\overline{\Omega};\mathbb{R}^n)$. As $||V_t^j|| = \mu_t^j$ we then deduce, for this $t \geq 0$ above, that by Theorem \ref{thm: 1} part 1 we have $||V_t|| = \mu_t$.

\bigskip

In order to control $||\delta V_t||$ we bound each of the terms appearing in (\ref{eqn: first variation of parabolic allen cahn varifolds}). Firstly, for each $g \in C^{1}(\overline{\Omega};\mathbb{R}^n)$ we have

$$ \int_{\Omega \cap \{|\nabla u_{j}| \neq 0\}} \nabla g \cdot \frac{\nabla u_{j}}{|\nabla u_{j}|} \otimes \frac{\nabla u_{j}}{|\nabla u_{j}|} \,d\xi_t^{\varepsilon_{i_j}} - \int_{\Omega \cap \{\nabla u_{j} = 0\}}(\nabla g \cdot I) \, d\xi_t^{\varepsilon_{i_j}} \leq 2 \sup |\nabla g| \int_\Omega|\xi_t^{\varepsilon_{i_j}}|(x)\,dx,$$
which converges to zero as $j \rightarrow \infty$ by \hyperref[asmp: discrepancy]{(A5)}. Next, as in the proof of Proposition \ref{prop: boundary energy bound} we see that for sufficiently large $j$ and a constant, $C$, depending only on $n$, $\Omega$, $\sigma$, and the energy bound in \hyperref[asmp: uniform bounds]{(A4)} we have
$$ \int_\Omega \varepsilon_{i_j}\partial_t u_j (g \cdot \nabla u_j) \leq C c(t)\sup|g|,$$
and
$$\int_{\partial\Omega} \left(\frac{\varepsilon_{i_j}|\nabla u_{j}|^2}{2} + \frac{W(u_{j})}{\varepsilon_{i_j}} - \frac{\sigma'(u_{j})^2}{\varepsilon_{i_j}}\right)(g \cdot \nu) \, d\mathcal{H}^{n-1} \leq 3c(t) \sup|g|.$$
Combining the above three bounds and comparing with (\ref{eqn: first variation of parabolic allen cahn varifolds}), we conclude that there exists some constant, $C(t)$, such that 
\begin{equation}\label{eqn: first variation of V t 1 is bounded}
    |\delta V_t(g)| = \lim_{j \rightarrow \infty} |\delta V_t^j(g)| \leq C(t)\sup|g|.
\end{equation}
This shows that for $\mathcal{L}^1$ a.e.~$t \geq 0$ we have $||\delta V_t|| (\overline{\Omega}) \leq C(t) < \infty$. We conclude by integrating $||\delta V_t||(\overline{\Omega})$ between $0$ and $T$; noting that $C(t)$ is locally integrable as a function of $t$ by the choice of subsequence satisfying (\ref{eqn: liminf energy control}), the energy bound in \hyperref[asmp: uniform bounds]{(A4)}, and Proposition \ref{prop: boundary energy bound}.
\end{proof}

\subsection{Limiting behaviour under measure non-concentration}

Unless stated otherwise, we assume that assumptions \hyperref[asmp: on W]{(A1)}-\hyperref[asmp: measure]{(A6)} hold throughout this subsection.

\begin{proof}[Proof of Theorem \ref{thm: 3} part 1] 
Fix one of the $\mathcal{L}^1$ a.e.~$t \in [0, T)$ such that \hyperref[asmp: measure]{(A6)} holds and a subsequence, $\{\varepsilon_{i_{j}}\} \subset \{ \varepsilon_{i}\}$, dependent on this $t$, as in Theorem \ref{thm: 1} part 3 so that both $w_{i_j} ( \, \cdot \, , t) = \Phi (u_{j} ( \, \cdot \, , t) ) \rightarrow w ( \, \cdot \, , t) = \Phi (u ( \, \cdot \, , t))$ in  $L^{1} (\Omega)$ and $w_{i_{j}} ( \, \cdot \, , t) = \Phi (u_j ( \, \cdot \, , t)) \rightarrow \tilde{w} ( \, \cdot \, , t) = \Phi (\tilde{u} ( \, \cdot \, , t))$ in  $L^{1} (\partial \Omega)$; where $\Phi$ is defined as in (\ref{eqn: definition of Phi}).

\bigskip

As $w ( \, \cdot \, , t) \in BV (\Omega)$ we have (e.g.~see \cite[Theorem 5.6]{EG15}), for $X \in C^{1}_{c} (\mathbb{R}^{n}; \mathbb{R}^{n})$, that
\begin{equation}\label{eqn: BV trace formula}
    \int_{\partial \Omega} (X \cdot \nu) \, T w ( \, \cdot \, , t) \, d \mathcal{H}^{n - 1} = \int_{\Omega} X \cdot d [D w ( \, \cdot \, , t)] + \int_{\Omega} w ( \, \cdot \, , t) \, \mathrm{div}X \, dx,
\end{equation}
where $T$ denotes the trace operator defined for functions in $BV(\Omega)$. Fixing $f \in C^{1} (\partial \Omega)$ and $\delta \in (0, \kappa)$, by using the gradient of the signed distance function to $\partial \Omega$ and an appropriate cutoff function, one can construct a vector field, $X \in C^{1}_{c} (\mathbb{R}^{n}; \mathbb{R}^{n})$, such that $(X \cdot \nu) = f$ on $\partial \Omega$ and $\text{spt}(X) \cap \Omega \subset N_{\delta}$.
With this vector field, by adding and subtracting terms involving $w_{i_j}(\,\cdot\,,t)$ and $\tilde{w}(\,\cdot\,,t)$ into (\ref{eqn: BV trace formula}) and applying the divergence theorem we have
\begin{eqnarray*}
    && \int_{\partial \Omega} (X \cdot \nu) \, T w ( \, \cdot \, , t) \, d \mathcal{H}^{n - 1} \\
    && \hspace{1cm} = \int_{N_{\delta}} X \cdot d [D w ( \, \cdot \, , t)] + \int_{N_{\delta}} (w ( \, \cdot \, , t) - w_{i_j} ( \, \cdot \, , t)) \, \mathrm{div} \, X \, dx - \int_{N_{\delta}} \nabla w_{i_j} ( \, \cdot \, , t) \, \cdot X \, dx \\
    && \hspace{2cm} + \int_{\partial \Omega} (w_{i_j} ( \, \cdot \, , t) - \tilde{w} ( \, \cdot \, , t)) \, (X \cdot \nu) \, d \mathcal{H}^{n - 1} + \int_{\partial \Omega} \tilde{w} ( \, \cdot \, , t) \, (X \cdot \nu) \, d \mathcal{H}^{n - 1}. 
\end{eqnarray*}
By a similar calculation to the proof of Theorem \ref{thm: 1} part 2, we bound
\begin{equation*}
    \left| \int_{N_{\delta}} X \cdot d [D w ( \, \cdot \, , t)] \right| \leq || X ||_{L^{\infty}} \liminf_{j \rightarrow \infty} |D w_{i_j} (\, \cdot \, , t)| (N_{\delta}) \leq || X ||_{L^{\infty}} \liminf_{j \rightarrow \infty} ||V_{t, 1}^{j}|| (N_{\delta}),
\end{equation*}
and similarly we have the bound
\begin{equation*}
    \left| \int_{N_{\delta}} \nabla w_{i_{j}} ( \, \cdot \, , t) \, \cdot X \, dx \right| \leq || X ||_{L^{\infty}} ||V_{t, 1}^{j}|| (N_{\delta}).
\end{equation*}
Thus, upon sending $j \rightarrow \infty$ we ensure that, by the two bounds above and the convergence of $w_{i_j}(\,\cdot\,,t)$, we have
\begin{equation*}
    \int_{\partial \Omega} f \, T w ( \, \cdot \, , t) \, d \mathcal{H}^{n - 1} = \int_{\partial \Omega} f \, \tilde{w} ( \, \cdot \, , t) \, d \mathcal{H}^{n - 1}.
\end{equation*}
As $f \in C^1(\partial\Omega)$ chosen above was arbitrary, we see that $T w ( \, \cdot \, , t) = \tilde{w} ( \, \cdot \, , t) = \Phi (\tilde{u} ( \, \cdot \, , t))$ $\mathcal{H}^{n - 1}$ a.e.~on $\partial \Omega$ by the fundamental lemma of the calculus of variations. Noting that, as $u = \pm 1$ $\mathcal{L}^n$ a.e.~in $\Omega$, $\tilde{u} = \pm 1$ $\mathcal{H}^{n-1}$ a.e.~on $\partial \Omega$, and recalling that both $\Phi(1) = c_0$ and $\Phi(-1) = 0$, we have
\begin{equation*}
    \begin{cases}
        u = c_0^{-1} (2 w - c_0) & \mathcal{L}^{n} \text{ a.e.~in } \, \Omega, \\
        \tilde{u} = c_0^{-1} (2 \tilde{w} - c_0) & \mathcal{H}^{n - 1} \text{ a.e.~on } \partial \Omega.
    \end{cases}
\end{equation*}
From the above expressions, the linearity of the trace operator, $T$, and the fact that $T w ( \, \cdot \, , t) = \tilde{w} ( \, \cdot \, , t)$ as deduced above, we conclude that $Tu ( \, \cdot \, , t) = \tilde{u} ( \, \cdot \, , t)$ $\mathcal{H}^{n-1}$ a.e.~on $\partial \Omega$, as desired.
\end{proof}

\begin{proof}[Proof of Theorem \ref{thm: 3} part 2]
  Fix one of the $\mathcal{L}^1$ a.e.~$t \geq 0$ such that (\ref{eqn: liminf energy control}) and \hyperref[asmp: measure]{(A6)} hold, and a subsequence, $\{\varepsilon_{i_j}\} \subset \{\varepsilon_i\}$, so that $V_t^{j} \rightharpoonup V_t$, $V_{t,1}^{j} \rightharpoonup V_{t,1}$, and $V^{j}_{t, 2} \rightharpoonup \sigma (1) \, \mathcal{H}^{n - 1} \lfloor_{\{ T u ( \, \cdot \, , \, t) = + 1\}}$. Note that by \hyperref[asmp: measure]{(A6)} we have that $\| V_{t, 1} \| (\partial \Omega) = 0$, and so recalling Theorem \ref{thm: 2} part 1 and Remark \ref{rem: restriction of first variation}, for $g \in C^{1}_{c} (\mathbb{R}^{n}; \mathbb{R}^{n})$ with $(g \cdot \nu) = 0$, we have that 
    \begin{equation*}
        (\delta V_{t, 1}) \lfloor_{\Omega} (g) + (\delta V_{t, 1}) \lfloor_{\partial \Omega} (g) + \sigma (1) \int_{\{T u ( \, \cdot \, , \, t) = + 1\}} \mathrm{div}_{\partial \Omega} (g) \, d \mathcal{H}^{n - 1} = - \int_{\Omega} g \cdot H_{t} \, d \| V_{t, 1} \|.
    \end{equation*}
    Thus, taking $g \in C_{c}^{1} (\Omega; \mathbb{R}^{n})$, we have
    \begin{equation*}
        (\delta V_{t, 1}) \lfloor_{\Omega} (g) = - \int_{\Omega} g \cdot H_{t} \, d \| V_{t, 1} \|.
    \end{equation*}
    For $\delta \in (0, \kappa)$, consider a cutoff function, $\eta_{\delta} \in C^{\infty}_{c} (\Omega)$, which satisfies the following, 
    \begin{equation*}
        \begin{cases}
            \eta_{\delta} = 0 & \text{on } N_{\frac{\delta}{2}}, \\ 
            \eta_{\delta} = 1 & \text{on } \Omega \setminus N_{\delta}, \\
            |\eta_{\delta}| \leq 1.
        \end{cases}
    \end{equation*}
    Then, by dominated convergence theorem, for any $g \in C^{1}_{c} (\mathbb{R}^{n}; \mathbb{R}^{n})$, we have
    \begin{eqnarray*}
        (\delta V_{t, 1}) \lfloor_{\Omega} (g) = \lim_{\delta \rightarrow 0} (\delta V_{t, 1}) \lfloor_{\Omega} (\eta_{\delta} g) = - \lim_{\delta \rightarrow 0}\int_{\Omega}  (\eta_{\delta} g \cdot H_{t}) \, d \| V_{t, 1} \| = - \int_{\Omega} g \cdot H_{t} \, d \| V_{t, 1} \|.
    \end{eqnarray*}
    Thus, for $g \in C_{c}^{1} (\mathbb{R}^{n}; \mathbb{R}^{n})$ with $(g \cdot \nu) = 0$ we have by the divergence theorem that
    \begin{eqnarray*}
        (\delta V_{t, 1}) \lfloor_{\partial \Omega} (g) &=& - \sigma (1) \int_{\{T u ( \, \cdot \, , t) = + 1\}} \mathrm{div}_{\partial \Omega} (g) \, d \mathcal{H}^{n - 1} \\
        &=& - \sigma(1)\int_{\partial^*\{Tu(\, \cdot \, , \, t) = +1\}} g \cdot \vec{n}_{\{Tu(\, \cdot \, , \, t) = +1\}} \, d\mathcal{H}^{n-2},
    \end{eqnarray*}
    as desired.
\end{proof}

\appendix
\section{Limiting Radon measures}\label{sec: limiting measures}

We deduced the existence of unique (i.e.~depending on the initial subsequence, $\{\varepsilon_i\} \subset (0,1)$, and not on a choice of $t \geq 0$) limiting measures, $\{\mu_{t}\}_{t \geq 0}$, for the measures $\{\mu_{t}^{\varepsilon_i}\}_{t \geq 0}$ in Theorem \ref{thm: 1} part 1; which followed from the semi-decreasing property of Lemma \ref{lem: semidecreasing property}. One could also hope to deduce uniqueness of limiting measures for the sequences $\{\mu_{t, 1}^{\varepsilon_i}\}_{t \geq 0}$ and $\{\mu_{t, 2}^{\varepsilon_i}\}_{t \geq 0}$.
However, due to a lack of a semi-decreasing property for each of these measures, in general we can only guarantee, from the energy bound in \hyperref[asmp: gradient flow]{(A3)}, that for each $t \geq 0$ there exists a subsequence, $\{\varepsilon_{i_{j}}\} \subset \{ \varepsilon_{i}\}$, dependent on this $t \geq 0$, and limiting Radon measures, $\mu_{t, 1}$ and $\mu_{t,2}$, such that $\mu_{t, 1}^j \rightharpoonup \mu_{t, 1}$ and $\mu_{t, 2}^j \rightharpoonup \mu_{t, 2}$. Due to the dependence on a subsequence, from this we cannot necessarily ensure uniqueness of the limiting measures obtained.

\bigskip

For each $t \geq 0$ we denote by
\begin{equation}\label{eqn: Collection of limiting subsequential measures}
    \mathcal{F}_{t,j} = \{ \mu_{t, j} \colon \text{there exists a subsequence,} \, \{ \varepsilon_{i_{l}} \} \subset \{ \varepsilon_{i} \}, \text{such that } \mu_{t, j}^{l} \rightharpoonup \mu_{t, j} \},
\end{equation}
the collections of limiting Radon measures, for $j = 1,2$.

\begin{rem}
    As noted in Remark \ref{rem: canonical measures under non-concentration}, under assumption \hyperref[asmp: measure]{(A6)}, for $\mathcal{L}^1$ a.e.~$t \geq 0$ we have that $\mu_{t, 1}^{\varepsilon_i} \rightharpoonup \mu_{t} \lfloor_{\Omega}$ and $\mu_{t, 2}^{\varepsilon_i} \rightharpoonup \mu_{t} \lfloor_{\partial \Omega}$ without passing to a further subsequence, giving rise to families of unique limiting measures. Thus, for such $\mathcal{L}^1$ a.e.~$t \geq 0$ we have $\mathcal{F}_{t,1} = \{ \mu_{t} \lfloor_{\Omega}\}$ and $\mathcal{F}_{t,2} = \{\mu_{t} \lfloor_{\partial \Omega}\}$.
\end{rem}
In order to define unique limiting measures independent of a specific $t \geq 0$ under assumptions \hyperref[asmp: on W]{(A1)}-\hyperref[asmp: uniform bounds]{(A4)} (in particular without assuming \hyperref[asmp: measure]{(A6)} as in Remark \ref{rem: canonical measures under non-concentration}), we now explore a potential approach by `slicing' appropriate space-time limiting measures. We first define, for each $\varphi \in C_{c} ( \mathbb{R}^n \times [0, + \infty) )$, the Radon measures 
\begin{equation*}
    \mu^{\varepsilon_i} (\varphi) = \int_{0}^{+\infty} \mu^{\varepsilon_i}_{t} (\varphi ( \, \cdot \, , t) ) \, dt.
\end{equation*}
Then, for each $T \in (0, +\infty)$ and $\varphi \in C_{c} ( \mathbb{R}^n \times [0, T) )$, as $i \rightarrow \infty$, by the dominated convergence theorem and (\ref{eqn: uniform t and i energy bounds}) we have
\begin{equation*}
    \mu^{i} (\varphi) = \int_{0}^{T} \mu_{t}^{\varepsilon_i} (\varphi ( \, \cdot \, , t)) \, dt \rightarrow \int_{0}^{T} \mu_{t} (\varphi ( \, \cdot \, , t)) \, dt \eqqcolon \mu (\varphi).
\end{equation*}
We similarly define the measures, 
\begin{equation*}
    \mu_{j}^{\varepsilon_i} (\varphi) = \int_{0}^{+\infty} \mu_{t, j}^{\varepsilon_i} (\varphi ( \, \cdot \, , t)) \, dt,
\end{equation*}
for $j = 1,2$, so that we have Radon measures, $\mu_{1}$ and $\mu_{2}$, on $\mathbb{R}^{n} \times [0, + \infty)$, such that, after potentially taking a subsequence and reindexing, $\mu_{j}^{\varepsilon_i} \rightharpoonup \mu_{j}$.

\bigskip

Now define the projection measure of $\mu_{1}$ onto $[0, + \infty)$ by setting $\lambda_{1} (I) = \mu_{1} (\mathbb{R}^{n} \times I)$ for each $I \subset [0, + \infty)$. By \cite[Theorem 1.45]{EG15}, for $\lambda_{1}$ a.e.~$t \geq 0$, there exists a Radon measure, $\Lambda_{t, 1}$, on $\mathbb{R}^{n}$ with $\Lambda_{t, 1} (\mathbb{R}^{n}) = 1$ and such that, for each $f \in C_{c} (\mathbb{R}^{n} \times [0, + \infty))$, we have
\begin{equation*}
    \int_{\mathbb{R}^{n} \times [0, + \infty)} f \, d \mu_{1} = \int_{0}^{+ \infty} \int_{\mathbb{R}^{n}} f \, d \Lambda_{t, 1} \, d \lambda_{1}.
\end{equation*}
We then extend $\Lambda_{t, 1}$ by $0$, so to be defined for all $t \in [0, + \infty)$.
Furthermore, for $I \subset [0, + \infty)$, relatively open, recalling (\ref{eqn: uniform t and i energy bounds}), we have that
\begin{equation*}
    \lambda_{1} (I) \leq \liminf_{i \rightarrow \infty} \int_{I} \mu_{t, 1}^{\varepsilon_{i}} (\mathbb{R}^{n}) \, dt \leq \left( E_{0} + \sup_{s \in [-1, 1]} |\sigma (s)| \, \mathcal{H}^{n-1} (\partial \Omega) \right) \, \mathcal{L}^{1} (I).
\end{equation*}
Therefore $\lambda_{1} \ll \mathcal{L}^{1}$, and thus $d \lambda_{1} = F_{1} \, d \mathcal{L}^{1}$ for some $\mathcal{L}^{1}$ measurable function $F_{1} \colon [0, + \infty) \rightarrow [0, + \infty)$, with
\begin{equation*}
    \text{ess} \sup_{t \in [0, + \infty)} F_{1} (t) \leq \left( E_{0} + \sup_{s \in [-1, 1]} |\sigma (s)| \, \mathcal{H}^{n-1} (\partial \Omega) \right).
\end{equation*}
For $j = 2$, after splitting $\mu_{2}$ into its positive and negative parts $\mu_{2}^{\pm}$, and defining their respective projection measures $\lambda_{2}^{\pm}$, we may argue similarly as above, deducing the existence of Radon measures $\Lambda_{t, 2}^{\pm}$ on $\mathbb{R}^{n}$, along with $\mathcal{L}^{1}$-measureable functions $F_{2}^{\pm} \colon [0, + \infty) \rightarrow [0, + \infty)$, again with finite bounds on their essential supremums.
For $\mathcal{L}^{1}$-a.e.~$t \geq 0$ we then define our time slice Radon measures on $\mathbb{R}^{n}$, by setting
\begin{equation*}
    \tilde{\mu}_{t, 1} = F_{1} (t)  \Lambda_{t, 1}, \hspace{0.5cm}\tilde{\mu}_{t, 2} = F_{2}^{+} (t) \Lambda_{t, 2}^{+} - F_{2}^{-} (t) \Lambda_{t, 2}^{-},
\end{equation*} 
so that for each $f \in C_{c} (\mathbb{R}^{n} \times [0, + \infty))$ we have
\begin{equation*}
    \int_{\mathbb{R}^{n} \times [0, + \infty)} f \, d \mu_{j} = \int_{0}^{+ \infty} \int_{\mathbb{R}^{n}} f \, d \tilde{\mu}_{t, j} \, d t.
\end{equation*}
We relate the Radon measures $\{\mu_{t, 1}^{\varepsilon_i}\}_{t \geq 0}$ and $\{\mu_{t, 2}^{\varepsilon_i}\}_{t \geq 0}$, to our families of limiting `slicing' Radon measures, $\{\tilde{\mu}_{t, 1}^{\varepsilon_i}\}_{t \geq 0}$ and $\{\tilde{\mu}_{t, 2}^{\varepsilon_i}\}_{t \geq 0}$, by the following weak convergence. For each $\varphi \in C_{c} (\mathbb{R}^{n})$ and $\psi \in C_{c} ([0, + \infty))$, we have that
\begin{eqnarray*}
    \int_{0}^{+ \infty} \psi \int_{\mathbb{R}^{n}} \varphi \, d \mu_{t, j}^{\varepsilon_i} \, dt = \int_{\mathbb{R}^{n} \times [0, + \infty)} \psi \, \varphi \, d \mu_{j}^{\varepsilon_i} \rightarrow \int_{\mathbb{R}^{n} \times [0, + \infty)} \psi \, \varphi \, d \mu_{j} = \int_{0}^{+ \infty} \psi \int_{\mathbb{R}^{n}} \varphi \, d \tilde{\mu}_{t, j} \, dt,
\end{eqnarray*}
for $j = 1,2$. Thus, by the density of compactly supported continuous functions in $L^{1} ([0, + \infty))$, along with the uniform bounds on $\{\mu_{t}^{\varepsilon_{i}}\}_{t \in [0, + \infty)}$, and $F_{1}$, $F_{2}^{\pm}$, we deduce that for each $\varphi \in C_{c} (\mathbb{R}^{n})$, we have
\begin{equation*}
    \int_{\mathbb{R}^{n}} \varphi \, d \mu_{t, j}^{\varepsilon_i} \rightharpoonup \int_{\mathbb{R}^{n}} \varphi \, d \tilde{\mu}_{t, j},
\end{equation*}
in $(L^{1} ([0, + \infty)))^{*}$ for $j = 1,2$.
It seems unclear if one can deduce stronger convergence of the Radon measures, $\mu_{t, j}^{\varepsilon_i}$, without passing to a further subsequence dependent on $t \geq 0$, and what relations, if any, exist between the limiting `slicing' Radon measures, $ \tilde{\mu}_{t, j}$, and the collection of limiting Radon measures, $\mathcal{F}_{t,j}$, as defined in (\ref{eqn: Collection of limiting subsequential measures}).

\section{Boundary monotonicity formula}\label{sec: monotonicity}

In this appendix we derive an Ilmanen type monotonicity formula (c.f.~\cite{I93,MT15}), as speculated about in \cite[Section 5]{KT18} for the time independent case of (\ref{eqn: parabolic Allen-Cahn with contact energy introduction}), valid up to the boundary for the Radon measures, $\mu_t^\varepsilon$, associated to solutions of (\ref{eqn: parabolic Allen-Cahn with contact energy introduction}); the precise statement of this is given by Proposition \ref{prop: monotonicity formula} below.

\bigskip

Recall that for $x,y \in \mathbb{R}^n$ and $t < s$ we define the $(n-1)$-dimensional backwards heat kernel as
    $$ \rho_{(y,s)}(x,t) = \frac{1}{(4\pi (s-t))^\frac{n-1}{2}}e^{-\frac{|x-y|^2}{4(s-t)}}.$$
For each $x \in N_{\kappa}$ there exists a unique point $\xi(x) \in \partial \Omega$ realising the distance between $x$ and $\partial \Omega$ (i.e.~$\mathrm{dist}(x,\partial\Omega) = |x - \xi(x)|$); we thus define the reflection, $\tilde{x}$, of a point $x \in N_\kappa$ in $\partial\Omega$ by setting $\tilde{x} = 2\xi(x) - x$. For points $x \in N_{\kappa}$ and $y \in \mathbb{R}^n$ we then define the reflected kernel by setting $$\tilde{\rho}_{(y,s)}(x,t) = \rho_{(y,s)}(\tilde{x},t).$$
Fix a radially symmetric cutoff function, $\eta \in C^\infty_c(\mathbb{R}^n; [0,1])$, such that
$$\begin{cases}
    \eta \equiv 1 & \text{on } B_{\frac{\kappa}{4}}(0),\\
    \frac{\partial\eta}{\partial r} \leq 0 & \text{on } \mathbb{R}^n,\\
    \mathrm{spt}(\eta) \subset B_{\frac{\kappa}{2}}(0),\\
\end{cases}$$
where $r = |x|$ and then, for $x,y \in N_{\kappa}$, define the truncated kernels 
$$\begin{cases}
    \rho_1 = \rho_1(x,t) = \eta(x - y)\rho_{(y,s)}(x,t),\\
    \rho_2 = \rho_2(x,t) = \eta(\tilde{x} - y)\tilde{\rho}_{(y,s)}(x,t).
\end{cases}$$
Noting that for $x \in N_{\kappa} \setminus N_{\frac{\kappa}{2}}$ and $y \in N_\frac{\kappa}{2}$ we have $|\tilde{x} - y| > \frac{\kappa}{2}$ (so that $\eta(\tilde{x}-y) = 0$), we smoothly extend our definition of $\rho_2$ to be identically zero for points $x \in \Omega \setminus N_{\kappa}$ and $y \in N_\frac{\kappa}{2}$.

\begin{prop}\label{prop: monotonicity formula} Suppose for some $\varepsilon \in (0,1)$ that $u \in C^\infty(\overline{\Omega} \times [0,\infty))$ is a solution to (\ref{eqn: parabolic Allen-Cahn with contact energy introduction}) with $E_\varepsilon(u(\, \cdot \,, 0)) \leq E_0$ where $\sigma \geq 0$. Then, there exist constants $C_1,C_2 > 0$ depending only on $n$, $\Omega$ and $E_0$ such that
\begin{equation*}
    \frac{d}{dt}(e^{C_1(s-t)^\frac{1}{4}}\mu_t^\varepsilon(\rho_1 + \rho_2)) \leq e^{C_1(s-t)^\frac{1}{4}}\left(\int_\Omega \frac{\rho_1 + \rho_2}{2(s-t)} \, d\xi_t^\varepsilon + C_2\right).
\end{equation*}
for $s > t > 0$ and $y \in N_\frac{\kappa}{2}$. For $s > t > 0$ and $y \in \Omega \setminus N_\frac{\kappa}{2}$ we have
\begin{equation*}
    \frac{d}{dt}\mu_{t,1}^\varepsilon(\rho_1) \leq\left(\int_\Omega \frac{\rho_1}{2(s-t)} \, d\xi_t^\varepsilon + C_2\right).
\end{equation*}
\end{prop}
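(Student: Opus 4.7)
The plan is to differentiate $t \mapsto \mu_t^\varepsilon(\rho_1 + \rho_2)$ directly and exploit the Robin boundary condition together with the reflection symmetry of $\rho_2$ to cancel the dominant boundary contributions. Writing $e(u) = \tfrac{\varepsilon|\nabla u|^2}{2} + \tfrac{W(u)}{\varepsilon}$ and $\rho = \rho_1 + \rho_2$, differentiation of $\mu_{t,1}^\varepsilon(\rho) = \int_\Omega \rho\,e(u)\,dx$ combined with an integration by parts in $\Omega$ and the Allen--Cahn PDE ($\varepsilon\partial_t u = \varepsilon\Delta u - W'(u)/\varepsilon$) gives
$$\frac{d}{dt}\mu_{t,1}^\varepsilon(\rho) = \int_\Omega \rho_t\, e(u)\,dx - \int_\Omega \varepsilon(\nabla\rho\cdot\nabla u)\partial_t u\,dx - \int_\Omega \varepsilon\rho(\partial_t u)^2\,dx + \int_{\partial\Omega}\rho\,\varepsilon(\nabla u\cdot\nu)\partial_t u\,d\mathcal{H}^{n-1},$$
while $\frac{d}{dt}\mu_{t,2}^\varepsilon(\rho) = \int_{\partial\Omega}\rho_t\sigma(u)\,d\mathcal{H}^{n-1} + \int_{\partial\Omega}\rho\sigma'(u)\partial_t u\,d\mathcal{H}^{n-1}$. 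The Robin boundary condition $\varepsilon(\nabla u\cdot\nu) = -\sigma'(u)$ then cancels the two boundary $\partial_t u$ integrals exactly, leaving the bulk terms plus $\int_{\partial\Omega}\rho_t\sigma(u)\,d\mathcal{H}^{n-1}$.

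Next I would apply the classical Ilmanen manipulation (cf.~\cite{I93, T03}) to the bulk expression: using $\partial_t\rho_1 + \Delta\rho_1 = -\rho_1/(2(s-t))$ (up to cutoff errors from $\eta$) together with the approximate analogue $\partial_t\rho_2 + \Delta\rho_2 = -\rho_2/(2(s-t)) + O(\kappa^{-1}|x-\xi(x)|\rho_2/(s-t))$ (arising from differentiating through the reflection $x\mapsto \tilde x$), a completion of the square on the $\partial_t u$ and $\nabla\rho\cdot\nabla u$ contributions produces the discrepancy term $\int_\Omega \rho\,d\xi_t^\varepsilon/(2(s-t))$, a non-positive square term, and two kinds of leftover errors: (i) cutoff errors from $\eta$, controlled by the exponential decay of the Gaussian on $\mathrm{spt}(\nabla\eta) \subset \{\kappa/4 \leq |\cdot| \leq \kappa/2\}$ and the energy bound $E_0$, yielding the additive $C_2$; (ii) boundary integration-by-parts terms. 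The crucial role of $\rho_2$ is that, by construction, $\nabla(\rho_1 + \rho_2)\cdot\nu \equiv 0$ on $\partial\Omega$ when $\partial\Omega$ is flat; for curved $\partial\Omega$, a Taylor expansion of $\tilde{x}$ around $\xi(x)$ leaves only a residual $\nabla(\rho_1 + \rho_2)\cdot\nu = O(\kappa^{-1}|x-\xi(x)|\rho)$ on $\partial\Omega$. Combined with the pointwise bound $|x-\xi(x)|\rho \lesssim (s-t)^{1/2}\rho$ near $\partial\Omega$, Gaussian estimates on $\Delta\rho$, and the hypothesis $\sigma \geq 0$ (used to discard the favorable sign term $-\int_{\partial\Omega}\rho\sigma(u)/(2(s-t))\le 0$ arising from $\int_{\partial\Omega}\rho_t\sigma(u)$), every remaining boundary error is bounded by $C_1(s-t)^{-3/4}\mu_t^\varepsilon(\rho)$. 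The integrating factor $e^{C_1(s-t)^{1/4}}$ is then chosen so that its derivative $-\tfrac{C_1}{4}(s-t)^{-3/4}e^{C_1(s-t)^{1/4}}$ precisely absorbs this accumulated boundary error, yielding the first statement.

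For the second statement, when $y \in \Omega \setminus N_{\kappa/2}$, the support of $\rho_1$ is contained in $\overline{B_{\kappa/2}(y)} \subset \overline{\Omega}$ and $\rho_1$ vanishes smoothly on $\partial B_{\kappa/2}(y)$ (since $\eta$ and all its derivatives vanish on $\partial B_{\kappa/2}(0)$), hence $\rho_1$ vanishes together with its normal derivative at any point of $\partial\Omega$ that meets $\mathrm{spt}(\rho_1)$. All boundary integration-by-parts contributions in the derivative of $\mu_{t,1}^\varepsilon(\rho_1)$ therefore vanish, and the proof reduces to the standard truncated interior Ilmanen monotonicity (cf.~\cite{I93, T03}) with only the cutoff error contributing the additive $C_2$.

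The main obstacle is the careful bookkeeping of the boundary integration-by-parts errors arising from the imperfect reflection across the curved $\partial\Omega$, together with verifying the approximate backward heat identity for $\rho_2$, for which one must compute the Jacobian and Laplacian of $x \mapsto \tilde{x}$ up to terms of order $\kappa^{-1}|x - \xi(x)|$. The curvature scale $\kappa^{-1}$ is precisely what drives the $(s-t)^{1/4}$ exponent in the correction factor $e^{C_1(s-t)^{1/4}}$, since the cumulative error along the flow scales as $(s-t)^{-3/4}$. The assumption $\sigma \geq 0$ is used exactly once, to discard the favorable boundary contribution $-\int_{\partial\Omega}\rho\sigma(u)/(2(s-t))$ in the upper bound.
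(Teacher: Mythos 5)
Your overall strategy is the same as the paper's: differentiate $\mu_t^\varepsilon(\rho_1+\rho_2)$, use the PDE and the Robin condition to cancel the boundary $\partial_t u$ integrals, complete the square, invoke the Ilmanen identity for $\rho_1$ and its approximate analogue for the reflected kernel $\rho_2$, use $(\nabla(\rho_1+\rho_2)\cdot\nu)=0$ on $\partial\Omega$, and absorb the accumulated $(s-t)^{-3/4}$ error with the integrating factor; the interior case is handled identically in both. (One small correction: the normal-derivative cancellation on $\partial\Omega$ is \emph{exact}, not approximate, even for curved boundaries, since the reflection fixes $\partial\Omega$ and its differential there is $I-2\nu\otimes\nu$; the curvature of $\partial\Omega$ enters elsewhere.)

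There is, however, a genuine gap in your treatment of the boundary errors. After the Bochner-type integration by parts of $\int_\Omega\varepsilon(\nabla\rho\cdot\nabla u)\partial_t u$ one is left with the boundary term $\int_{\partial\Omega}\varepsilon(\nabla^\top\rho_1\cdot\nabla u)(\nabla u\cdot\nu)\,d\mathcal{H}^{n-1}=-\int_{\partial\Omega}\sigma'(u)(\nabla^\top\rho_1\cdot\nabla u)\,d\mathcal{H}^{n-1}$ (the normal part of $\nabla(\rho_1+\rho_2)$ having cancelled). Your proposal lumps this into ``boundary integration-by-parts terms'' claimed to be bounded by $C(s-t)^{-3/4}\mu_t^\varepsilon(\rho_1+\rho_2)$, but a direct estimate of it via \hyperref[asmp: on sigma]{(A2)} produces $\frac{|x-y|}{s-t}\rho_1$ against the boundary trace of the \emph{interior} energy density $\frac{\varepsilon|\nabla u|^2}{2}+\frac{W(u)}{\varepsilon}$, which $\mu_t^\varepsilon$ (whose boundary part only carries $\sigma(u)$) does not control; so the claimed absorption does not close as stated. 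The paper's resolution is structural: integrate by parts tangentially on the closed manifold $\partial\Omega$ to rewrite $-\int_{\partial\Omega}\sigma'(u)(\nabla^\top\rho_1\cdot\nabla u)=\int_{\partial\Omega}\sigma(u)\Delta^\top\rho_1$, pair this with $\int_{\partial\Omega}\sigma(u)(\rho_1)_t$, and compute explicitly $(\rho_1)_t+\Delta^\top\rho_1=-\frac{((x-y)\cdot\nu)^2}{4(s-t)^2}\rho_1-\frac{(x-y)\cdot H_{\partial\Omega}}{2(s-t)}\rho_1$ (up to cutoff errors). The first term is non-positive and is discarded using $\sigma\ge 0$; the second, involving the mean curvature of $\partial\Omega$, is what genuinely produces the $(s-t)^{-3/4}\mu_t^\varepsilon(\rho_1+\rho_2)$ error. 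Your identification of the favorable-sign term as $-\int_{\partial\Omega}\rho\,\sigma(u)/(2(s-t))$ coming from $\rho_t$ alone is not correct: $\rho_t$ by itself has no sign, and the cancellation of the leading $\frac{n-1}{2(s-t)}\rho$ piece only occurs after adding $\Delta^\top\rho_1$ (the kernel being $(n-1)$-dimensional and $\partial\Omega$ being $(n-1)$-dimensional). Without this tangential integration by parts and the explicit computation of $\rho_t+\Delta^\top\rho_1$ on $\partial\Omega$, the boundary estimate does not go through.
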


\begin{proof}
We first compute, by integration by parts and (\ref{eqn: parabolic Allen-Cahn with contact energy}) respectively, if we are given an appropriately smooth placeholder function, $\rho$, that (for a function $f$ we write $f_t$ for $\partial_t f$ throughout this proof for brevity)
\begin{equation*}
    \begin{split}
        \frac{d}{dt} \mu_{t,1}^\varepsilon(\rho) =& \frac{d}{dt}\int_\Omega \rho \, d\mu_{t,1}^\varepsilon \\
        =& \int_\Omega \rho_t \, d\mu_{t,1}^\varepsilon + \int_\Omega \rho \left[\varepsilon 
        \nabla u \cdot \nabla u_t + \frac{W'(u)}{\varepsilon}u_t\right]\\
        =& \int_\Omega \rho_t \, d\mu_{t,1}^\varepsilon + \int_\Omega \rho \left[-\varepsilon u_{t} \Delta u + \frac{W'(u)}{\varepsilon}u_t \right]-\int_\Omega \varepsilon (\nabla \rho \cdot \nabla u ) u_t + \int_{\partial \Omega} 
        \varepsilon \rho (\nabla u \cdot \nu) u_t\\
        =& \int_\Omega \rho_t \, d\mu_{t,1}^\varepsilon - \int_\Omega \varepsilon \rho u_t^2 - \int_\Omega \varepsilon (\nabla \rho \cdot \nabla u) u_t + \int_{\partial \Omega} \varepsilon\rho(\nabla u \cdot \nu) u_t.
    \end{split}
\end{equation*}
We now add and subtract the terms $\varepsilon (\nabla \rho \cdot \nabla u) u_t$ and $\varepsilon \frac{(\nabla \rho \cdot \nabla u)^2}{\rho}$ and continue with the above to see that
\begin{equation*}
    \begin{split}
        \frac{d}{dt}\mu_{t,1}^\varepsilon(\rho) =& \int_\Omega \rho_t d\mu_{t,1}^\varepsilon + \int_\Omega \varepsilon (\nabla \rho \cdot \nabla u) u_t - \int_\Omega \varepsilon\rho \left(u_t + \frac{(\nabla \rho \cdot \nabla u)}{\rho}\right)^2\\
        &+ \int_\Omega \varepsilon \frac{(\nabla \rho \cdot \nabla u)^2}{\rho} + \int_{\partial \Omega} \varepsilon \rho (\nabla u \cdot \nu) u_t.
    \end{split}
\end{equation*}
As an aside we compute, by (\ref{eqn: parabolic Allen-Cahn with contact energy}) and integration by parts respectively, that
\begin{equation*}
    \begin{split}
        \int_\Omega \varepsilon (\nabla \rho \cdot \nabla u) u_t =& \int_\Omega \varepsilon (\nabla \rho \cdot \nabla u) \left(\Delta u - \frac{W'(u)}{\varepsilon^2}\right)\\
        =& -\int_\Omega \varepsilon\rho_i u_{ij}u_j + \rho_{ij}u_i u_j + \int_\Omega \varepsilon \Delta \rho \frac{W(u)}{\varepsilon}\\
        &+\int_{\partial \Omega} \varepsilon(\nabla\rho \cdot \nabla u) (\nabla u \cdot \nu) - \int_{\partial \Omega} \varepsilon (\nabla \rho \cdot \nu) \frac{W(u)}{\varepsilon}.
    \end{split}
\end{equation*}
Observing that
\begin{equation*}
    \begin{split}
        \int_\Omega \rho_iu_{ij}u_j = -\int_\Omega \Delta \rho |\nabla u|^2 - \int_\Omega \rho_i u_{ij}u_j + \int_{\partial\Omega} (\nabla \rho \cdot \nu)|\nabla u|^2,
    \end{split}
\end{equation*}
we have that
\begin{equation*}
    \begin{split}
        \int_\Omega \varepsilon(\nabla \rho \cdot \nabla u) u_t =& \int_\Omega \Delta \rho \left( \frac{\varepsilon|\nabla u|^2}{2} + \frac{W(u)}{\varepsilon} \right) - \int_\Omega \varepsilon \rho_{ij} u_i u_j\\
        &+ \int_{\partial \Omega} \varepsilon (\nabla \rho \cdot \nabla u)(\nabla u \cdot \nu) - \int_{\partial \Omega} (\nabla \rho \cdot \nu) \left(\frac{\varepsilon|\nabla u|^2}{2} + \frac{W(u)}{\varepsilon}\right).
    \end{split}
\end{equation*}
Combining this with our initial calculations above, we have
\begin{equation*}
    \begin{split}
        \frac{d}{dt}\mu_{t,1}^\varepsilon(\rho) =& - \int_\Omega \varepsilon\rho \left(u_t + \frac{(\nabla \rho \cdot \nabla u)}{\rho}\right)^2 + \int_\Omega \varepsilon \frac{(\nabla \rho \cdot \nabla u)^2}{\rho} + \int_\Omega (\rho_t + \Delta \rho )d\mu_{t,1}^\varepsilon - \int_\Omega \varepsilon \rho_{ij}u_iu_j\\
        &+ \int_{\partial \Omega} \varepsilon \rho (\nabla u \cdot \nu) u_t + \int_{\partial \Omega} \varepsilon (\nabla \rho \cdot \nabla u)(\nabla u \cdot \nu) - \int_{\partial \Omega}  (\nabla \rho \cdot \nu) \left(\frac{\varepsilon|\nabla u|^2}{2} + \frac{W(u)}{\varepsilon}\right).
    \end{split}
\end{equation*}
By setting $\rho = \rho_k$ for $k = 1,2$ in the above, noting that $(\nabla(\rho_1 + \rho_2) \cdot \nu) = 0$ on $\partial \Omega$, and dropping the non-positive first term we obtain
\begin{equation*}
    \begin{split}
        \frac{d}{dt} \mu_{t,1}^\varepsilon(\rho_1 + \rho_2) \leq& \sum_{k = 1}^2 \int_\Omega \varepsilon \frac{(\nabla \rho_k \cdot \nabla u)^2}{\rho_k} + ((\rho_k)_t + \Delta \rho_k)\mu_{t,1}^\varepsilon - \varepsilon (\rho_k)_{ij} u_i u_j\\
        &+ 2 \int_{\partial \Omega} \varepsilon \rho_1 (\nabla u \cdot \nu) u_t + \varepsilon (\nabla^\top \rho_1 \cdot \nabla u)(\nabla u \cdot \nu).
    \end{split}
\end{equation*}
Using the fact that $\mu_{t,1}^\varepsilon = \varepsilon |\nabla u|^2 - \xi_t^\varepsilon$ and setting $a^\varepsilon = \frac{\nabla u}{|\nabla u|}$ we then see that, integrating by parts twice to swap indices in the third term on the first line above, we have
\begin{equation}\label{eqn: time derivative of interior energy in monotonicity}
    \begin{split}
        \frac{d}{dt} \mu_{t,1}^\varepsilon(\rho_1 + \rho_2) \leq& \sum_{k = 1}^2 \int_\Omega \varepsilon |\nabla u|^2 \left(\frac{(\nabla \rho_k \cdot a^\varepsilon)^2}{\rho_k} + ((\rho_k)_t + (I - a^\varepsilon \otimes a^\varepsilon)\nabla^2 \rho_k)) \right)\\
        &-\sum_{k = 1}^2\int_\Omega ((\rho_k)_t + \Delta \rho_k) d\xi_t^\varepsilon + 2\int_{\partial \Omega} \varepsilon \rho_1(\nabla u \cdot \nu) u_t + \varepsilon (\nabla^\top \rho_1 \cdot \nabla u)(\nabla u \cdot \nu).
    \end{split}
\end{equation}
Recalling the boundary condition $\varepsilon (\nabla u \cdot \nu)= -\sigma'(u)$ we have
\begin{equation*}
    \begin{split}
        \frac{d}{dt} \mu_{t,1}^\varepsilon(\rho_1 + \rho_2) \leq& \sum_{k = 1}^2 \int_\Omega \varepsilon |\nabla u|^2 \left(\frac{(\nabla \rho_k \cdot a^\varepsilon)^2}{\rho_k} + ((\rho_k)_t + (I - a^\varepsilon \otimes a^\varepsilon)\nabla^2 \rho_k)) \right)\\
        &-\sum_{k = 1}^2\int_\Omega ((\rho_k)_t + \Delta \rho_k) d\xi_t^\varepsilon - 2\int_{\partial \Omega} \sigma'(u)(\rho_1 u_t + (\nabla^\top \rho_1 \cdot \nabla u)).
    \end{split}
\end{equation*}
We also note that
\begin{equation}\label{eqn: time derivative of boundary energy in monotonicity}
    \begin{split}
        \frac{d}{dt}\int_{\partial \Omega} \sigma(u) (\rho_1 + \rho_2) = 2 \frac{d}{dt}\int_{\partial \Omega} \sigma (u) \rho_1 = 2\int_{\partial \Omega} \sigma'(u)u_t\rho_1 + \sigma(u)(\rho_1)_t.
    \end{split}
\end{equation}
Thus, by summing (\ref{eqn: time derivative of interior energy in monotonicity}) and (\ref{eqn: time derivative of boundary energy in monotonicity}) and integrating by parts
\begin{align*}
        \frac{d}{dt} \int_\Omega (\rho_1 + \rho_2) \, d\mu_{t,1}^\varepsilon &+ \frac{d}{dt}\int_{\partial \Omega} \sigma(u) (\rho_1 + \rho_2)\\
        \leq& \sum_{k = 1}^2 \int_\Omega \varepsilon |\nabla u|^2 \left(\frac{(\nabla \rho_k \cdot a^\varepsilon)^2}{\rho_k} + ((\rho_k)_t + (I - a^\varepsilon \otimes a^\varepsilon)\nabla^2 \rho_k)) \right)\\
        &-\sum_{k = 1}^2\int_\Omega ((\rho_k)_t + \Delta \rho_k) d\xi_t^\varepsilon + 2\int_{\partial \Omega} \sigma(u)(\rho_1)_t - \sigma'(u)(\nabla^\top \rho_1 \cdot \nabla u)\\
        =& \sum_{k = 1}^2 \int_\Omega \varepsilon |\nabla u|^2 \left(\frac{(\nabla \rho_k \cdot a^\varepsilon)^2}{\rho_k} + ((\rho_k)_t + (I - a^\varepsilon \otimes a^\varepsilon)\nabla^2 \rho_k)) \right)\\
        &-\sum_{k = 1}^2\int_\Omega ((\rho_k)_t + \Delta \rho_k) d\xi_t^\varepsilon + 2\int_{\partial \Omega} \sigma(u)((\rho_1)_t + \Delta^\top \rho_1).
\end{align*}
Now, we recall the following identity (which follows by a direct computation as in \cite[Section 3.1]{I93})
\begin{equation*}
    \frac{(a \cdot \nabla \rho)^{2}}{\rho} + (( I - a \otimes a) \cdot \nabla^{2} \rho) + \rho_{t} = 0,
\end{equation*}
where $a$ is any unit vector, and note that for $x \in N_{\kappa}$, and $y \in N_{\frac{\kappa}{2}}$, we have
\begin{equation*}
    |x - \zeta(x)| = |\tilde{x} - \zeta(x)| \leq |\tilde{x} - y|.
\end{equation*}
Then, one follows the computations in \cite[Lemma 3.2]{MT15}, to deduce
\begin{equation*}
    \frac{(a \cdot \nabla \tilde{\rho})^{2}}{\tilde{\rho}} + (( I - a \otimes a) \cdot \nabla^{2} \tilde{\rho}) + \tilde{\rho}_{t} \leq C \left( \frac{|\tilde{x} - y|}{s - t} + \frac{|\tilde{x} - y|^{3}}{(s - t)^{2}} \right) \tilde{\rho},
\end{equation*}
for a constant, $C$, depending only on $n$ and $\Omega$.
Continuing from above, and following near identical computations to the that of \cite[Proposition 3.1]{MT15}, we have
\begin{equation}\label{eqn: monotnonicity pre boundary computation}
    \begin{split}
         &\frac{d}{dt} \int_\Omega (\rho_1 + \rho_2) \, d\mu_{t,1}^\varepsilon + \frac{d}{dt}\int_{\partial \Omega} \sigma(u) (\rho_1 + \rho_2)\\
         &\quad\quad\quad\leq \int_\Omega \frac{\rho_1 + \rho_2}{2(s-t)} \, d\xi_t^\varepsilon + \frac{C}{(s-t)^{\frac{3}{4}}}\int_\Omega (\rho_1 + \rho_2) \, d\mu_{t,1}^\varepsilon + C
         + 2\int_{\partial \Omega} \sigma(u)((\rho_1)_t + \Delta^\top \rho_1).
    \end{split}
\end{equation}
We now compute the term $\rho_t + \Delta^\top\rho$ explicitly. For a point $x \in \partial \Omega$ and an orthonormal basis, $\{\tau_i\}_{i = 1}^{n-1}$, of $T_x \partial \Omega$ we have
$$\nabla^\top\rho = \sum_{i = 1}^{n-1} (\nabla \rho \cdot \tau_i) \tau_i = -\sum_{i = 1}^{n-1} \left(\frac{\rho}{2(s-t)}(x-y) \cdot \tau_i \right) \tau_i,$$
and so
$$\Delta^\top\rho = \sum_{j = 1}^{n-1} D_{\tau_j} (\nabla^\top\rho) \cdot \tau_j = \sum_{j = 1}^{n-1} D_{\tau_j}\left(-\sum_{i = 1}^{n-1} \left(\frac{\rho}{2(s-t)}(x-y) \cdot \tau_i \right) \tau_i \right) \cdot \tau_j.$$
Letting $D_{\tau_i}\tau_j = \sum_{k = 1}^{n-1}\Gamma^k_{ij}\tau_k + A_{ij}\nu$ and noting that both $D_{\tau_{i}} (x-y) = \tau_{i}$ and $(\tau_i \cdot \nu) = 0$ for each $1 \leq i \leq n-1$, we rewrite $\Delta^\top \rho$ as 
\begin{align*} \Delta^\top \rho =& - \sum_{j = 1}^{n-1} D_{\tau_j}\left(\frac{\rho}{2(s-t)}(x-y) \cdot \tau_j \right) - \sum_{i,j = 1}^{n-1} \left(\frac{\rho}{2(s-t)}(x-y) \cdot \tau_i \right) D_{\tau_j}\tau_i \cdot \tau_j\\
=& \sum_{j = 1}^{n-1} \left(\frac{\rho}{4(s-t)^2}((x-y) \cdot \tau_j)^2 \right) - \sum_{j = 1}^{n-1} \frac{\rho}{2(s-t)} - \sum_{j = 1}^{n-1} \frac{\rho}{2(s-t)}(x-y) \cdot \left(\sum_{k = 1}^{n-1}\Gamma_{jj}^k\tau_k + A_{jj}\nu\right)\\
&- \sum_{i,j = 1}^{n-1} \left(\frac{\rho}{2(s-t)}(x-y) \cdot \tau_i\right)\Gamma_{ji}^j.
\end{align*}
Reindexing $k$ as $i$ in the above and noting that $\Gamma_{jj}^i + \Gamma_{ji}^j = D_{\tau_j}(\tau_j \cdot \tau_i) = 0$ we see that
\begin{align*}
    \Delta^\top \rho =& \sum_{j = 1}^{n-1} \left(\frac{\rho}{4(s-t)^2}((x-y) \cdot \tau_j)^2 \right) - \frac{(n-1)}{2(s-t)}\rho + \sum_{j = 1}^{n-1} \left(\frac{\rho}{2(s-t)} (x-y) \cdot A_{jj}\nu)\right)\\
    =& \frac{|x-y|^2 - ((x-y)\cdot \nu)^2}{4(s-t)^2}\rho - \frac{(n-1)}{2(s-t)}\rho - \frac{(x-y) \cdot H_{\partial\Omega}}{2(s-t)}\rho,
\end{align*}
where $H_{\partial\Omega} = \sum_{j = 1}^{n-1} A_{jj}\nu$ denotes the mean curvature vector of $\partial \Omega$. Noting that
$$\rho_t = \frac{(n-1)}{2(s-t)} \rho - \frac{|x-y|^2}{4(s-t)^2}\rho,$$
we conclude that
\begin{equation}\label{eqn: kernel on boundary computation}
\rho_t + \Delta^\top\rho = -\frac{((x-y)\cdot \nu)^2}{4(s-t)^2}\rho - \frac{(x-y)\cdot H_{\partial\Omega}}{2(s-t)}\rho.
\end{equation}
By dropping the non-positive first term in (\ref{eqn: kernel on boundary computation}), we get (noting that the kernel is uniformly bounded on the support of the cutoff function, and now allowing our constant, $C$, appearing below to depend on supremum of the mean curvature of $\partial \Omega$) the bound
$$(\rho_1)_t + \Delta^\top\rho_1 \leq C + C\frac{|x-y|}{2(s-t)}\rho_1.$$
Using this bound and by splitting the integral in the same manner as in \cite[(3.15)/(3.16)]{MT15} we conclude that for the boundary term appearing in (\ref{eqn: monotnonicity pre boundary computation}) and for some constants, $C$ and $\widetilde{C}$, depending only on $n, \Omega$ and $E_0$ we have 
$$2\int_{\partial \Omega} \sigma(u)((\rho_1)_t + \Delta^\top \rho_1) \leq C + \frac{\widetilde{C}}{(s-t)^\frac{3}{4}}\int_{\partial\Omega} \sigma(u)(\rho_1 + \rho_2).$$
By considering an integrating factor and incorporating the bounds above into (\ref{eqn: monotnonicity pre boundary computation}) one has
\begin{equation*}
        \frac{d}{dt}\left(e^{C_1(s-t)^\frac{1}{4}}\left(\int_\Omega (\rho_1 + \rho_2) \, d\mu_{t,1}^\varepsilon  + \int_{\partial \Omega} \sigma(u) (\rho_1 + \rho_2)\right)\right)
        \leq e^{C_1(s-t)^\frac{1}{4}}\left(\int_\Omega \frac{\rho_1 + \rho_2}{2(s-t)} \, d\xi_t^\varepsilon + C_2\right),
\end{equation*}
or more succinctly
\begin{equation*}
    \frac{d}{dt}(e^{C_1(s-t)^\frac{1}{4}}\mu_t^\varepsilon(\rho_1 + \rho_2)) \leq e^{C(s-t)^\frac{1}{4}}\left(\int_\Omega \frac{\rho_1 + \rho_2}{2(s-t)} \, d\xi_t^\varepsilon + C_2\right),
\end{equation*}
where $C_1$ and $C_2$ are constants depending only on $n, \Omega$ and $E_0$. Finally, as in \cite[Proposition 3.1]{MT15}, for $s > t > 0$ and points $y \in \Omega \setminus N_\frac{\kappa}{2}$ in the above we ignore both of the boundary terms and set $\rho_2$ to be identically zero, obtaining the interior monotonicity by the estimates for $\rho_1$.
\end{proof}

\section*{Acknowledgements}

KMS was supported in part by the EPSRC [EP/N509577/1], [EP/T517793/1]. YT acknowledges the support of the JSPS Grant-in-aid for scientific research [23H00085]. Part of this work was completed while MW was a PhD student at The EPSRC Centre for Doctoral Training in Geometry and Number Theory (The London School of Geometry and Number Theory), University College London, supported by the EPSRC [EP/S021590/1].

\begingroup

\bibliographystyle{alpha} 
\bibliography{main}
\endgroup

\hrule 

\Addresses

\end{document}